\renewcommand{\dateseparator}{.}
\newcommand{\todayiso}{\twodigit\day \dateseparator \twodigit\month \dateseparator \the\year}
\newcommand{\res}{{\rm res}}
\newtheorem{theorem}{Theorem}
\newtheorem{lemma}{Lemma}[section]
\newtheorem{prop}{Proposition}[section]
\newtheorem{df}{Definition}[section]
\newtheorem{ntn}{Notation}[section]
\begin{document}

\title{\textbf{Orbifold GW theory as Hurwitz-Frobenius submanifold}}
\date{\todayiso}
\author{Alexey Basalaev}
\address{National Research University Higher School of Economics, Vavilova 7, 117312 Moscow, Russia}
\email{aabasalaev@edu.hse.ru}
\address{Leibniz Universit\"at Hannover, Welfengarten 1, 30167 Hannover, Germany}
\email{basalaev@math.uni-hannover.de}

\maketitle

\begin{abstract}
  In this paper we study the relation between the Frobenius manifolds of GW theory and Hurwitz-Frobenius manifold. We prove that orbifold GW theory of $\mathbb P^1(2,2,2,2)$ is isomorphic to the submanifold in the Hurwitz-Frobenius manifold of ramified coverings of the sphere by the genus $1$ curve with the ramification profile $(2,2,2,2)$ over $\infty$.
\end{abstract}

  \section{Introduction}

    The structure of a Frobenius manifold appears essentially in many different constructions. Examples include the base space of the semi-universal deformation of the singularity (so called Saito's flat structures), genus zero part of the GW theory, invariant theory of root systems, spaces of ramified coverings of the certain type (so called Hurwitz-Frobenius manifolds). These structures appear to be investigated in their natural environment with the stress on the special properties corresponding to the type of origin. The connection between the Frobenius manifolds of the different kind is still an open question. The best developed types of these connections is the isomorphisms between GW theory and Saito's flat structures and between Saito's flat structures and invariant theory of root systems. In this article we present a new type of correspondence. We relate GW theory of the orbifold $\mathbb P^1(2,2,2,2)$ with a submanifold in a certain Hurwitz-Frobenius manifold.

    \subsection*{Orbifold GW theory}
    In \cite{CR} the authors gave the treatment of GW-theory for an orbifold $X$. Roughly speaking their work allows us to mimic the ``usual'' GW theory to the case of an orbifold $X$. 
    
    Fix $\beta \in H_2(X, \mathbb Z)$. The authors defined the moduli space $\overline {\mathcal M}_{g,n}(X, \beta)$ of degree $\beta$ stable orbifold maps from the genus $g$ curve with $n$ marked points to $X$. Together with the suitable fundamental cycle $[ \overline {\mathcal M}_{g,n}(X, \beta) ] ^{vir}$ one can introduce the correlators like in the usual GW theory. Define ${ev_i: \overline {\mathcal M}_{g,n}(X, \beta) \rightarrow X}$ -- the map sending the stable orbifold map with $n$ markings to its value at the $i$-th marked point.

    Let $\gamma_i \in H^*_{orb} (X, \mathbb Q)$ -- the elements of the Chen-Ruan orbifold cohomology ring. The correlators are defined by:

    $$
      \langle \gamma_1 , \dots , \gamma_n \rangle_{g,n,\beta}^X := \int_{ [ \overline {\mathcal M}_{g,n}(X, \beta) ] ^{vir} } ev_1^* \gamma_1 \wedge \dots \wedge ev_n^* \gamma_n.
    $$

    It is convenient to assemble the numbers obtained into a generating function called genus $g$ potential of the (orbifold) GW theory. Let $\textbf t := \sum_i \gamma_i t_i$ for the formal parameters $t_i$ and $\{ \gamma_i \}$ -- the basis in $H^*_{orb}(X, \mathbb Q)$.

    $$
      \mathcal F^{X}_g := \sum_{n, \beta} \frac{1}{n!} \langle \textbf t , \dots , \textbf t \rangle_{g,n,\beta}^X.
    $$

    The most important for us will be the genus zero potential. Due to the geometrical properties of the moduli space of curves like in the usual GW theory the orbifold genus zero potential solves the WDVV equation too. It reads:

    \begin{equation}\label{eq:WDVV}
      \frac{\partial^3 \mathcal F_0^X }{\partial t_i \partial t_j \partial t_p} \ \eta^{pq} \ \frac{\partial^3 \mathcal F_0^X}{\partial t_q \partial t_k \partial t_l} 
      = 
      \frac{\partial^3 \mathcal F_0^X}{\partial t_i \partial t_k \partial t_p} \ \eta^{pq} \ \frac{\partial^3 \mathcal F_0^X}{\partial t_q \partial t_j \partial t_l},
    \end{equation}
    for every fixed $i,j,k,l$ and $\eta$ -- a certain bilinear form in $H^*_{orb}(X, \mathbb Q)$. An important implication of this is that $\mathcal F_0^X$ defines a \textit{formal} Frobenius manifold (we refer the reader to \cite{Manin} for details). Assume that the cohomology class $\gamma_0$ is the unity in $H^*_{orb}(X)$. The multiplication and pairing are defined by:

    \begin{equation*}
	c(\partial_i, \partial_j, \partial_k)  := \frac{\partial^3 F}{\partial t_i \partial t_j \partial t_k }, \quad \eta(\partial_i, \partial_j) := \frac{\partial^3 F}{\partial t_0 \partial t_i \partial t_j }.
    \end{equation*}
    Note that from the definition $\eta_{ij}, c_{ijk}$ are functions of $t$. A special property of the orbifold GW theory is that for the choice of $\gamma_0$ we assumed $\eta_{ij}$ does not depend on the point: $\partial_k \eta_{ij} = 0$.
    
    \subsection*{Space of ramified coverings}
    We relate it to the so-called Hurwitz-Frobenius manifolds, introduced by Dubrovin in \cite{D} (see also \cite{B}). Consider the space of meromorphic functions 
    $$
      C \xrightarrow{\lambda} \mathbb P^1
    $$
    on the compact genus $g$ Riemann surface $C$. Fix the pole orders of $\lambda$ to be $\textbf k := \{k_1, \dots, k_m\}$:
    $$
      \lambda^{-1}(\infty) = \{ \infty_1, \dots, \infty_m \}, \quad \infty_p \in C,
    $$
    so that locally at $\infty_p$ we have $\lambda(z) = z^{k_p}$. 

    Such meromorphic functions define the ramified coverings of $\mathbb P^1$ by $C$ with the ramification profile $\textbf k$ over $\infty$. We further assume that $\lambda$ has only simple ramification points at $P_i \in \mathbb P^1 \backslash \{0\}$. The degree of the ramified covering is computed to be $N = \sum k_i$ and using the Riemann-Hurwitz formula we can compute the dimension of this space of functions:
    $$
      n = 2g -2 + \sum_{i=1}^m k_i + m,
    $$
    that is exactly the number of simple ramification points. The smooth part of the Hurwitz-Frobenius manifold is parametrized by the values of $\lambda$ at the simple ramification points: $(\lambda(P_1), \dots, \lambda(P_n))$.

    \begin{df}\label{defintion: Hurwitz equivalence}
      Two pairs $(C_1, \lambda_1)$ and $(C_2, \lambda_2)$ as above are said to be Hurwitz-equivalent if $\lambda_1 = \psi \circ \lambda_2$ for some analytic map $\psi: C_1 \rightarrow C_2$.
    \end{df}

    In what follows we consider the pairs $(C, \lambda)$ up to the equivalence introduced.

    \begin{df}
      We define the Hurwitz-Frobenius manifold $\mathcal H_{g; \textbf{k}}$ to be the moduli space of pairs $(C, \lambda)$ as above with the additional data:
      \begin{itemize}
	\item $\{ a_1, \dots, a_g, b_1, \dots, b_g \}$ - the choice of a symplectic basis in $H_2(C)$,
	\item $\{ w_1, \dots, w_n \}$ - uniformization parameter of $\lambda$ at $\infty_i$
	$$
	  w_i^{k_i}(z) = \lambda(z), \quad z \in U(\infty_i).
	$$
      \end{itemize}

    \end{df}

    \subsection*{Frobenius manifold structure on $\mathcal H_{g; \textbf{k}}$}
    Following Dubrovin we define a Frobenius manifold structure on $\mathcal H_{g; \textbf{k}}$. Let $\phi$ be a differential of the first kind on $C$. Define the multi-valued coordinate $v(P)$ on $C$ as:

    \begin{equation}\label{eq:Multi-valued coordinate}
      v(P) = \int_{\infty_1}^P \phi.
    \end{equation}

    Introduce the coordinates on $\mathcal H_{g, \textbf k}$:
    
    \begin{equation*}
      \begin{aligned}
	t_{i; a} & := \res_{\infty_i} (w_i)^{-a} v d\lambda,  \quad  & 1 \ge i \ge m, k_i > a \ge 1,
	\\
	v_j & := \int_{\infty_1}^{\infty_j} \phi, \quad V_j := -\res_{\infty_j} \lambda \phi, \quad  & m \ge j > 1,
	\\
	B_j & := \oint_{b_j} \phi, \quad C_j := \oint_{a_j} \lambda \phi. & g \ge j \ge 1.
      \end{aligned}
    \end{equation*}

    Let $\partial_i$ be the basis vectors in $T {\mathcal H}_{g, \textbf k}$ w.r.t. the coordinates introduced and $\lambda^\prime = \partial_v \lambda$. Define structure constants of the multiplication $c(\cdot,\cdot,\cdot)$ and pairing $\eta(\cdot,\cdot)$:

    \begin{equation}\label{eq:MetricAndMultiplication}
      \begin{aligned}
	\eta (\partial_i, \partial_j) &:= \sum \res_{\lambda^\prime = 0} \frac{\partial_i \lambda \partial_j \lambda {\rm d}v}{\lambda^\prime},
	\\
	c(\partial_i, \partial_j, \partial_k) &:= \sum \res_{\lambda^\prime = 0} \frac{\partial_i \lambda \partial_j \lambda \partial_k \lambda {\rm d}v}{\lambda^\prime}.
      \end{aligned}
    \end{equation}

    The theorem of Dubrovin states that this multiplication and pairing define a Frobenius manifold structure on $\mathcal H_{g; \textbf k}$ with the coordinates introduced above playing the role of flat coordinates. Namely, in these coordinates we have $\partial_i \eta_{jk} = 0$. For the particular choice of flat coordinates as above the only non-vanishing entries of $\eta$ are:

    \begin{equation*}
      \eta_{t_{i;a}, t_{j;b}} = \frac{1}{k_i} \delta_{i,j} \delta_{a+b, k_i}, 
      \quad
      \eta_{v_i, V_j} = \frac{1}{k_i} \delta_{i,j},
      \quad
      \eta_{B_j, C_k} = \frac{1}{2 \pi i} \delta_{j,k}.
    \end{equation*}

    \begin{df}
      The function $\mathcal F^{H}$ called Frobenius (or WDVV) potential is defined by: 
      $$
	\partial_i \partial_j \partial_k \mathcal F^{H} = c(\partial_i,\partial_j, \partial_k).
      $$
    \end{df}

    It is clear from the definition that the multiplication defined by the structure constants $c(\partial_i,\partial_j, \partial_k)$ is commutative and associative. From the second property it follows that $\mathcal F^{H}$ is a solution of the WDVV equation \eqref{eq:WDVV}.

    \subsection*{GW to Hurwitz-Frobenius correspondence}
    Consider the space $\mathcal H_{1; (2,2,2,2)}$. Namely $g=1$ and $\textbf k = (2,2,2,2)$. We identify the genus 1 Riemann surface $C$ with the elliptic curve $\mathcal E = \mathbb C / (2 \omega_1 \mathbb Z + 2 \omega_2 \mathbb Z)$. The generic meromorphic function $\lambda: \mathcal E \rightarrow \mathbb P^1$ in this case reads:

    \begin{equation}\label{eq:LambdaGeneric}
      \lambda(z) = \sum_{i = 1}^4 \left( \wp(z- a_i) u_i + \frac{1}{2} \frac{\wp^\prime(z - a_i)}{\wp(z-a_i)} s_i \right) + c,
    \end{equation}
    where $\wp(z) = \wp(z ; 2\omega_1, 2\omega_2)$.

    We have the ``moduli'':
    \begin{itemize}
      \item $a_i$ -- positions of the poles on $\mathcal E$,
      \item $u_i,s_i$ -- behaviour at the poles,
      \item $c$ -- the shift,
      \item $2\omega_1, 2\omega_2$ -- the ``moduli'' of the elliptic curve itself.
    \end{itemize}

    The main theorem of this paper is the following:

    \begin{theorem}\label{theorem:P14}
      The Frobenius manifold of the $\rm GW$-theory of $\mathbb P^1 (2,2,2,2)$ is the Frobenius submanifold in the Hurwitz-Frobenius manifold $\mathcal H_{1,(2,2,2,2)}$ obtained by the following restriction:

      \begin{equation}\label{eq:restriction}
	\begin{aligned}
	  a_1 = 0, \quad a_2 = \omega_1 & + \omega_2, \quad a_3 = \omega_1, \quad a_4 = \omega_2,
	  \\
	  & s_1 = s_2 = s_3 = s_4 = 0.
	\end{aligned}
      \end{equation}
    \end{theorem}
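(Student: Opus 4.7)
The plan is to prove the theorem in three steps: interpret the restriction geometrically, verify that it yields a Frobenius submanifold of the correct dimension $6 = \dim H^*_{\mathrm{orb}}(\mathbb{P}^1(2,2,2,2))$, and match the resulting Frobenius structure with the orbifold GW theory.

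For the geometric interpretation, the key fact is that $\wp$ is even and $\wp'$ is odd, while the $a_i$ in (\ref{eq:restriction}) are the four $2$-torsion points of $\mathcal E$. Hence the summand $\wp(z-a_i)\,u_i$ in (\ref{eq:LambdaGeneric}) is invariant under the hyperelliptic involution $z\mapsto -z$, whereas $\tfrac12 \frac{\wp'(z-a_i)}{\wp(z-a_i)}\,s_i$ is anti-invariant. Setting $s_i=0$ therefore picks out exactly the $\mathbb{Z}_2$-invariant $\lambda$, which descend to meromorphic functions on $\mathcal E/\mathbb{Z}_2 \cong \mathbb{P}^1(2,2,2,2)$. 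So the restricted submanifold parameterizes the ramified covers $\mathbb{P}^1(2,2,2,2)\to \mathbb{P}^1$ with the prescribed ramification profile at $\infty$.

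To verify that the restriction defines a Frobenius submanifold, I would begin with a dimension count. Under the restriction the poles $\infty_j$ coincide with the $a_j$, so (choosing $\phi = dz$) the flat coordinates $v_j = \int_{\infty_1}^{\infty_j}\phi$ are fixed in terms of $\omega_1,\omega_2$; likewise $V_j=-\res_{\infty_j}\lambda\,\phi$ vanishes because $\wp$ has zero residue. The surviving flat coordinates are therefore the four $t_{i;1}$ together with $B_1,C_1$, giving exactly six parameters matching $H^*_{\mathrm{orb}}(\mathbb{P}^1(2,2,2,2))$: four twisted-sector classes, the unit, and the divisor class. Closure under the Dubrovin multiplication (\ref{eq:MetricAndMultiplication}) follows because a tangent direction to the submanifold corresponds to a $\mathbb{Z}_2$-invariant infinitesimal deformation of $\lambda$; the critical points of $\lambda$ pair into orbits $\{P,-P\}$, the integrand $\partial_i\lambda\,\partial_j\lambda\,\partial_k\lambda\,\mathrm{d}v/\lambda'$ transforms compatibly under the involution, and the product of two tangent directions is again invariant. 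Non-degeneracy of the restricted $\eta$ and compatibility with the unit $\partial/\partial c$ and the Euler field are checked analogously.

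The main obstacle is the final matching of the two potentials. On the Hurwitz side, the structure constants are residues of $\partial_i\lambda\,\partial_j\lambda\,\partial_k\lambda\,\mathrm{d}v/\lambda'$ at the critical points of the restricted $\lambda$, which one can evaluate using the addition formula for $\wp$ and the values $e_1,e_2,e_3$. On the GW side, $\mathcal F_0^{\mathbb{P}^1(2,2,2,2)}$ admits a known expression in terms of quasi-modular forms in the elliptic modulus $\tau$. Rather than a coefficient-by-coefficient comparison, the cleaner route is to exploit the semisimplicity of both Frobenius structures and invoke a uniqueness principle (Dubrovin's classification by monodromy data, or equivalently the string and divisor equations together with the Euler vector field), reducing the identification to comparing canonical coordinates, canonical multiplication, and the spectrum of the Euler field at a generic point — a finite check that is computational but routine once the flat coordinate dictionary of the previous step is in place.
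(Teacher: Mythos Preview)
Your first two steps take a more conceptual route than the paper. The paper never invokes the $\mathbb{Z}_2$-symmetry $z\mapsto -z$; instead it shows directly (Proposition~\ref{proposition: RestrictedPotential}) that the structure constants involving $v_k,V_k$ vanish on the locus \eqref{eq:restriction} by computing the relevant residues one at a time. Your fixed-locus argument is neater in principle, but you would need to check that the involution really is an automorphism of the \emph{marked} Hurwitz--Frobenius manifold (it must respect the uniformizers $w_i$ and the symplectic basis, and preserve $\phi$), and that fixed loci of such automorphisms are Frobenius submanifolds in Dubrovin's sense; these points are not automatic and are not addressed.

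The genuine gap is step~3. The paper does not use semisimplicity or monodromy data at all: it computes the remaining structure constants $c(\tau,C_1,C_1)$, $c(t_i,t_i,C_1)$, $c(t_i,t_i,t_i)$, $c(t_i,t_i,t_j)$ explicitly from the residue formula (Section~\ref{section: Conincidence}), converts them to theta constants via $e_k=\tfrac13\,\vartheta_1'''/\vartheta_1'-\vartheta_{k+1}''/\vartheta_{k+1}$ and $\omega_1\eta_1=-\tfrac{1}{12}\,\vartheta_1'''/\vartheta_1'$ together with the heat equation, integrates to obtain $\mathcal F_R$, and then matches it term by term with the GW potential after the linear change of variables of Proposition~\ref{eqP2222in4} and a rescaling $\tau\mapsto\tau/(\pi i)$. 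Your proposed replacement---a ``finite check'' of canonical coordinates and Euler spectrum plus an appeal to a uniqueness principle---is not actually carried out, and it is not clear it would be shorter: extracting canonical coordinates on the GW side from the quasi-modular potential of \cite{ST} is essentially the same computation in disguise, and the Euler spectrum $(1,\tfrac12,\tfrac12,\tfrac12,\tfrac12,0)$ alone does not pin down the Frobenius structure (there are non-isomorphic Frobenius manifolds with this spectrum). Without either performing the monodromy computation or the direct residue computation, the identification of the two potentials remains an assertion rather than a proof.
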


    \subsection{Organization of the paper}
    In Section \ref{section: GW} we review the GW theory of $\mathbb P^1(2,2,2,2)$ explicitly writing down genus zero potential in a suitable form. In Section \ref{section: Hurwitz} we compute flat coordinates of the Hurwitz-Frobenius manifold. Section \ref{section: StrCon} describes the technique of the computation of the structure constants of the Hurwitz-Frobenius manifold. In Section \ref{section: Restriction} we consider the restriction of the structure constants to the submanifold. Finally in Section \ref{section: Conincidence} we show that up to rescaling of variables the WDVV potential of the submanifold coincides with the suitably written GW potential of $\mathbb P^1(2,2,2,2)$.

    \subsection{Acknowledgement}
    The author is grateful to Prof. Wolfgang Ebeling for the fruitful discussions.

  \section{GW-theory of $\mathbb P^1(2,2,2,2)$}\label{section: GW}
    We consider the orbifold GW theory of one particular orbifold $X = \mathbb P^1(2,2,2,2)$. This is a projective line with four points with the non-trivial orbifold structure $\mathbb Z_2$. Equivalently it could be obtained as the global quotient of an elliptic curve by a $\mathbb Z_2$ action.
    An explicit treatment of the genus zero part of its orbifold GW theory was given in \cite{ST}. We have to introduce several objects to present it here.

    \begin{df}
      The functions $\vartheta_i(z,\tau)$ for $\tau \in \mathbb H$ and $z \in \mathbb C$ represented by the following Fourier expansions:
      \begin{equation*}
	\begin{aligned}
	  \vartheta_1(z,\tau) &= i \sum_{n = -\infty}^\infty (-1)^n e^{(n-1/2)^2 \pi i \tau} e^{(2n-1) \pi i z},
	  \\
	  \vartheta_2(z,\tau) &= \sum_{n = -\infty}^\infty e^{(n-1/2)^2 \pi i \tau} e^{(2n-1) \pi i z},
	  \\
	  \vartheta_3(z,\tau) &= \sum_{n = -\infty}^\infty e^{n^2 \pi i \tau} e^{2n \pi i z},
	  \\
	  \vartheta_4(z,\tau) &= \sum_{n = -\infty}^\infty (-1)^n e^{n^2 \pi i \tau} e^{2n \pi i z}.
	\end{aligned}
      \end{equation*}
      will be called Jacobi theta functions or just theta functions.
    \end{df}

    It is clear from their Fourier expansions that Jacobi theta functions satisfy the Heat Equation:
    $$
      \frac{\partial^2 \vartheta_i(z, \tau)}{\partial z^2} = 4 \pi i \frac{\partial \vartheta_i (z, \tau)}{\partial \tau}, \quad 4 \ge i \ge 1.
    $$

    \begin{df}
      The functions $\vartheta_i(\tau) := \vartheta_i(0,\tau)$ for $4 \ge i \ge 2$ will be called theta constants.
    \end{df}
    Note that $\vartheta_1(0,\tau) \equiv 0$. Therefore we do not consider it.

    \begin{ntn}
      In what follows we skip the argument for the theta constants whenever it is fixed and we denote:
      $$
	\vartheta^\prime_i(\tau) := \frac{\partial}{\partial z} \vartheta_i (\tau).
      $$
    \end{ntn}

    \begin{df}
      Define: 
      $$
	X_i(\tau) := 2 \frac{\partial}{\partial \tau} \log \vartheta_i, \quad 2 \ge i \ge 4.
      $$
    \end{df}

    In \cite{ST} the authors computed explicitly the genus zero potential of the GW-theory of $\mathbb P^1 (2,2,2,2)$:

    \begin{align*}
      & \mathcal F ^{\mathbb P^1 (2,2,2,2)}_0 (t_0,t_1,t_2,t_3,t_4,t) = \frac{1}{2} t_0^2 t + \frac{1}{4}t_0 (\sum_{i=1}^4 t_i^2) + (t_1t_2t_3t_4)f_0(t) \\
	& + \frac{1}{4}(t_1^4+t_2^4+t_3^4+t_4^4) f_1(t) + \frac{1}{6}(t_1^2t_2^2 + t_1^2t_3^2+t_1^2t_4^2 + t_2^2t_3^2+t_2^2t_4^2+t_3^2t_4^2) f_2(t),
    \end{align*}

    where
    \begin{equation}\label{eqHalphenReduction}
    \begin{cases}
      &f_0(t) := \frac{1}{8} X_3(t) - \frac{1}{8} X_4(t), 
      \\
      &f_1(t) := -\frac{1}{12} X_2(t) - \frac{1}{48}X_3(t) - \frac{1}{48}X_4(t),
      \\
      &f_2(t) := -\frac{3}{16} X_3(t) - \frac{3}{16}X_4(t).
    \end{cases}
    \end{equation}

    The genus zero potential $\mathcal F ^{\mathbb P^1 (2,2,2,2)}_0$ satisfies the quasi-homogeneity condition. Denote by $E_{GW}$ the Euler vector field 
    $$
      E_{GW} := t_0 \frac{\partial }{\partial t_0} + \sum_{i=1}^4 t_i \frac{1}{2} \frac{\partial}{\partial t_i}.
    $$
    Then the quasihomogeneity condition reads:
    $$
      E_{GW} \cdot \mathcal F ^{\mathbb P^1 (2,2,2,2)}_0 = 2 \mathcal F ^{\mathbb P^1 (2,2,2,2)}_0.
    $$

    The WDVV equation on $\mathcal F ^{\mathbb P^1 (2,2,2,2)}_0$ is equivalent to the system of PDE on $X_i(t)$ known as Halphen's system:
    \begin{equation}\label{eqHalphen}
    \begin{cases}
      &\frac{d}{dt} (X_2(t) + X_3(t)) = 2X_2(t)X_3(t), 
      \\
      &\frac{d}{dt} (X_3(t) + X_4(t)) = 2X_3(t)X_4(t),
      \\
      &\frac{d}{dt} (X_4(t) + X_2(t)) = 2X_4(t)X_2(t).
    \end{cases}
    \end{equation}

    It is well known fact that the $X_i$ as above give solution of this system (see for example \cite{O}). We do not give the proof here because it requires some additional properties of theta constants that are not important for us.

    \begin{prop}\label{eqP2222in4}
      Applying a linear change of variables the potential $\mathcal F ^{\mathbb P^1 (2,2,2,2)}_0$ can be rewritten in the form:
      \begin{equation*}
	\begin{aligned}
	  & {\mathcal F} ^{\mathbb P^1 (2,2,2,2)}_0  (t_0, \tilde t_1, \tilde t_2, \tilde t_3, \tilde t_4,t) = \frac{t_0^2 t}{2} + \frac{t_0}{2} \sum_{i=1}^4 (\tilde  t_i)^2 - (\tilde t_1^2 \tilde t_3^2 + \tilde t_2^2 \tilde t_4^2) \frac{1}{4} X_3(t)  
	  \\
	    &- ( \tilde t_1^2 \tilde t_4^2 + \tilde t_2^2 \tilde t_3^2) \frac{1}{4} X_4(t) - ( \tilde t_3^2 \tilde t_4^2 + \tilde t_1^2 \tilde t_2^2 ) \frac{1}{4} X_2(t) - \frac{1}{16} \sum_{i=1}^4 (\tilde  t_i)^4 \gamma(t),
	\end{aligned}
      \end{equation*}
      with the Euler vector field preserved: 
      $$
	E_{GW} (t_0, \tilde t_i) = E_{GW} (t_0, t_i),
      $$
      and $\gamma(t) = \frac{2}{3} \sum X_i (t) $.
    \end{prop}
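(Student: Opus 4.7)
The plan is to exhibit the explicit block-diagonal change of variables
$$
  t_1 = \tilde t_1 + \tilde t_2,\quad t_2 = \tilde t_1 - \tilde t_2,\quad t_3 = \tilde t_3 + \tilde t_4,\quad t_4 = \tilde t_4 - \tilde t_3
$$
(up to a relabeling of the $\tilde t_i$) and verify the claimed identity by direct polynomial expansion. This transformation has determinant $-4$ and the fundamental property $\sum_i t_i^2 = 2 \sum_i \tilde t_i^2$, which already accounts for the change of the quadratic coefficient from $\tfrac14$ to $\tfrac12$ in the term $t_0 \sum t_i^2$. Preservation of the Euler vector field is then automatic: the change is linear and homogeneous and each $t_i$ carries weight $1/2$ under $E_{GW}$, so each $\tilde t_i$ does as well and $E_{GW}$ has the claimed form.

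Next I would expand the three $S_4$-symmetric quartic invariants $t_1 t_2 t_3 t_4$, $\sum_i t_i^4$, and $\sum_{i<j} t_i^2 t_j^2$ in the new variables. For example $t_1 t_2 t_3 t_4 = (\tilde t_1^2 - \tilde t_2^2)(\tilde t_4^2 - \tilde t_3^2)$, and using $(a\pm b)^4$ one gets $\sum_i t_i^4 = 2 \sum_i \tilde t_i^4 + 12(\tilde t_1^2 \tilde t_2^2 + \tilde t_3^2 \tilde t_4^2)$; the third invariant expands similarly into $\sum_i \tilde t_i^4$ together with all six products $\tilde t_i^2 \tilde t_j^2$. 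Collecting the coefficients of $\sum_i \tilde t_i^4$, of the two same-block products $\tilde t_1^2 \tilde t_2^2$, $\tilde t_3^2 \tilde t_4^2$, and of the four cross-block products $\tilde t_i^2 \tilde t_j^2$ with $i \in \{1,2\}$, $j \in \{3,4\}$ reduces the proposition to three scalar identities relating $f_0, f_1, f_2$ to $X_2, X_3, X_4$ and $\gamma$; substituting \eqref{eqHalphenReduction} and $\gamma = \tfrac{2}{3}(X_2 + X_3 + X_4)$, each identity becomes an elementary algebraic check (for instance the diagonal coefficient reduces to $\tfrac{f_1}{2} + \tfrac{f_2}{6} = -\tfrac{1}{24}\sum X_i$).

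The main obstacle is purely combinatorial bookkeeping: the six products $\tilde t_i^2 \tilde t_j^2$ split into three orbits corresponding to the three partitions of $\{1,2,3,4\}$ into unordered pairs, and different sign choices in the change of variables permute which of $X_2, X_3, X_4$ is attached to which orbit. One must pick the signs so that the labeling matches the statement. No deeper content is required — conceptually the proposition expresses the decomposition of the three coefficients $f_0, f_1, f_2$ into the eigenbasis of the Klein four-group acting on the four indices by simultaneous sign flips, which is the combinatorial shadow of the order-two points of the elliptic curve used later in Theorem~\ref{theorem:P14}.
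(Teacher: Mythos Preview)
Your proposal is correct and follows essentially the same route as the paper: the paper uses the change $t_1=\tilde t_4-\tilde t_3$, $t_2=\tilde t_4+\tilde t_3$, $t_3=\tilde t_1-\tilde t_2$, $t_4=\tilde t_1+\tilde t_2$ (your transformation up to the relabeling you anticipated), expands, and then verifies exactly the scalar identities you describe, listing them as $\tfrac{1}{6}f_2+\tfrac{1}{2}f_1=-\tfrac{1}{16}\gamma$, $\tfrac{2}{3}f_2\mp f_0=-\tfrac{1}{4}X_{3,4}$, and $3f_1-\tfrac{1}{3}f_2=-\tfrac{1}{4}X_2$. One small correction: there are four such identities, not three, since the cross-block products split into two orbits carrying $\pm f_0$ (as you yourself note in the final paragraph), but this does not affect the argument.
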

    \begin{proof}
      We apply the change of variables $ t_1 = \tilde t_4 - \tilde t_3$, $t_2 = \tilde t_4 + \tilde t_3$, $ t_3 = \tilde t_1 - \tilde t_2$, $ t_4 = \tilde t_1 + \tilde t_2$ that obviously preserves the WDVV equation. Simple computations show:
      \begin{equation*}
	    \begin{cases}
	    & \frac{1}{6} f_2(t) + \frac{1}{2} f_1(t) = -\frac{1}{24} \sum X_i = -\frac{1}{16}\gamma(t), \\
	    & \frac{2}{3} f_2(t)- f_0(t)= - \frac{1}{4} X_3, \\
	    & \frac{2}{3} f_2(t) + f_0(t) = - \frac{1}{4} X_4,\\
	    & 3 f_1(t) - \frac{1}{3}  f_2(t)= - \frac{1}{4} X_2.
	    \end{cases}
      \end{equation*}

      It is an easy computation to check that the Euler vector field is perserved too.
    \end{proof}

    \begin{ntn}
      We will denote by $\mathcal F^{\rm GW}$ the genus zero potential of the orbifold GW theory of $\mathbb P^1(2,2,2,2)$ written in the form as above.
    \end{ntn}

  \section{The space $\mathcal H _{1;(2,2,2,2)}$}\label{section: Hurwitz}

    The Hurwitz-Frobenius manifold we consider in this paper is $\mathcal H_{1;(2,2,2,2)}$. It is parametrizing the meromorphic functions of the elliptic curve $\mathcal E = \mathbb C / (2\omega_1 \mathbb Z + 2\omega_2 \mathbb Z)$, $\lambda: \mathcal E \rightarrow \mathbb P^1$ with certain additional data that was presented in the introduction.

    We will use extensively the theory of elliptic functions in our treatment.

    \subsection{Elliptic functions}

    Consider the lattice $\Lambda = 2 \omega_1 \mathbb Z + 2 \omega_2 \mathbb Z$ with $\omega_2 / \omega_1 \in \mathbb H$. We will denote by $D$ its fundamental domain.

    \begin{df}
      A meromorphic function $f$ on $\mathbb C$ is called elliptic w.r.t. the lattice $\Lambda$ if it satisfies the following periodicity properties: 
      $$
	f(z + 2\omega_1) = f(z), \quad f(z + 2\omega_2) = f(z).
      $$
    \end{df}

    Recall the Weierstrass elliptic function:
    $$
      \wp\left(z;2\omega_1,2\omega_2\right) := \frac{1}{z^{2}}+\sum_{\omega \in \Lambda\smallsetminus\left\{ 0\right\} }\left(\frac{1}{\left(z-\omega\right)^{2}}-\frac{1}{\omega^{2}}\right).
    $$
    It is obvious from the definition that $\wp$ is indeed an elliptic function. Another important example is its derivative $\wp^\prime$ that is an elliptic function with the same periods.
    
    \begin{prop}\label{prop:EllipticFunctions}
      The space of elliptic functions on the elliptic curve ${E = \mathbb C / \Lambda}$ is generated by $\wp$ and $\wp^\prime$:
      $$
	\mathcal M (\mathcal E) = \mathbb C (\wp, \wp^\prime).
      $$
    \end{prop}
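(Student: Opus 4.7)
The plan is to apply a parity decomposition and reduce the claim to proving that every even elliptic function is already rational in $\wp$. Given $f \in \mathcal M(\mathcal E)$, I would write $f = f_+ + f_-$ with $f_\pm(z) := (f(z) \pm f(-z))/2$; both pieces are elliptic, $f_+$ is even and $f_-$ is odd. Since $\wp^\prime$ is odd and elliptic, the quotient $f_-/\wp^\prime$ is an even elliptic function. So it suffices to show that any even elliptic function $g$ lies in $\mathbb C(\wp)$; combining the two reductions then gives $f = R_1(\wp) + \wp^\prime R_2(\wp) \in \mathbb C(\wp, \wp^\prime)$.

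For an even $g$, I would read off its divisor in the fundamental domain $D$. Evenness pairs each zero at $a$ with one of the same order at $-a$, and similarly for poles. At the four half-periods $0, \omega_1, \omega_2, \omega_1 + \omega_2$, which are the fixed points of $z \mapsto -z$, any zero or pole of $g$ must have even order, as one sees by expanding $g(a+h) = g(a-h)$ in powers of $h$. Enumerate the distinct zero orbits of $g$ in $D/\{\pm 1\}$ as $a_1, \dots, a_r$ and the pole orbits as $b_1, \dots, b_s$, each half-period orbit counted with half its order, with repetitions allowed to record multiplicity. Then form
\[
R(z) := \prod_{i=1}^{r} \bigl( \wp(z) - \wp(a_i) \bigr) \, \Big/ \, \prod_{j=1}^{s} \bigl( \wp(z) - \wp(b_j) \bigr).
\]
Each factor $\wp(z) - \wp(a)$ has simple zeros at $\pm a$ when $a$ is generic and a double zero at $a$ when $a$ is a half-period, so the divisor of $R$ on $\mathcal E \setminus \{0\}$ equals that of $g$; every factor in the numerator contributes a double pole at $0$ and every factor in the denominator a double zero, which, combined with the evenness of the orders at $z=0$, makes the divisors agree there as well. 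Therefore $g/R$ is a holomorphic elliptic function, hence constant by Liouville's theorem, and $g \in \mathbb C(\wp)$.

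The main obstacle is not any single slick step but the careful divisor bookkeeping at the two-torsion points and at $z = 0$: verifying that evenness does force even order at the half-periods, and that the multiplicities chosen in the product $R$ balance the orders of $g$ at every point of $\mathcal E$. With that accounting correctly set up, Liouville's theorem closes the argument and yields $\mathcal M(\mathcal E) = \mathbb C(\wp, \wp^\prime)$.
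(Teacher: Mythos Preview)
The paper states this proposition without proof; it is quoted as a classical fact about elliptic functions and used only as input for writing down the generic form of $\lambda$. So there is no argument in the paper to compare against.

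Your proposal is the standard textbook proof (parity decomposition, then matching divisors of an even elliptic function against a rational function in $\wp$), and it is correct. A couple of small points worth tightening in a final write-up: when you list the zero and pole orbits $a_i$, $b_j$, you should explicitly exclude the lattice point $z=0$ from those lists (since $\wp(0)$ is not defined); you do handle the point $0$ separately afterwards, but the exclusion should be stated up front. Second, the reason the orders at $0$ automatically match is that both $g$ and $R$ have degree-zero divisors on $\mathcal E$, and you have already arranged their divisors to agree on $\mathcal E \setminus \{0\}$; this forces $\mathrm{ord}_0(g) = \mathrm{ord}_0(R)$ without any further computation. With those clarifications the argument is complete, and Liouville's theorem finishes it as you say.
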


    For our purposes it is helpful to rewrite the expansion of $\wp$ and $\wp^\prime$ in $z$ and $\tau := \omega_2 / \omega_1$:

    \begin{equation*}
      \begin{aligned}
	\wp(z,\tau) &= z^{-2} + \frac{1}{20}g_2(\tau) z^2 + \frac{1}{28} g_3(\tau) z^4 + O(z^6),
	\\
	\wp^\prime(z,\tau) &= -2 z^{-3} + \frac{2}{20}g_2(\tau) z + \frac{4}{28} g_3(\tau) z^3 + O(z^5),
      \end{aligned}    
    \end{equation*}
    for $g_2(\tau), g_3(\tau)$ - modular invariants of the elliptic curve.

    The connection between the two definitions of the function $\wp$ is given by the equality:

    \begin{equation}\label{eq:WP omega to tau}
      (2\omega_1)^2 \wp(z;2\omega_1,2\omega_2) = \wp \left( \frac{z}{2\omega_1}; \tau \right),
    \end{equation}
    for $\tau = \omega_2 /\omega_1$.

    Another important property of the elliptic functions is the following:
    \begin{prop}
      Let $f(z)$ be an elliptic function. Then the sum of its residues in the fundamental domain $D$ of $\Lambda$ is zero:
      $$
	\sum_{a \in D} \res_{z=a} f(z) = 0.
      $$
    \end{prop}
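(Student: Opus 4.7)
The plan is to prove the proposition by applying the residue theorem to the boundary of a fundamental parallelogram and exploiting the $\Lambda$-periodicity of $f$ to show that the contour integral vanishes.

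First I would choose a fundamental domain $D$ with vertices $z_0, z_0 + 2\omega_1, z_0 + 2\omega_1 + 2\omega_2, z_0 + 2\omega_2$ for some base point $z_0 \in \mathbb{C}$. Since the poles of $f$ form a discrete set in $\mathbb{C}$, we may shift $z_0$ slightly so that no pole of $f$ lies on the boundary $\partial D$; this translation does not change the set of residues of $f$ inside any fundamental domain (each residue is simply attached to a different representative). Orient $\partial D$ counterclockwise. By the classical residue theorem,
\begin{equation*}
  \oint_{\partial D} f(z)\, dz = 2\pi i \sum_{a \in D} \res_{z=a} f(z).
\end{equation*}

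Next I would show that the left-hand side vanishes by a pairwise cancellation of opposite edges. The boundary splits into the horizontal pair (traversed from $z_0$ to $z_0 + 2\omega_1$ and from $z_0 + 2\omega_1 + 2\omega_2$ to $z_0 + 2\omega_2$) and the vertical pair (traversed from $z_0 + 2\omega_1$ to $z_0 + 2\omega_1 + 2\omega_2$ and from $z_0 + 2\omega_2$ to $z_0$). On opposite edges the parameter differs by a lattice vector ($2\omega_2$ for the first pair, $2\omega_1$ for the second), and the two edges are oriented oppositely. Using the periodicity $f(z + 2\omega_1) = f(z)$ and $f(z+2\omega_2) = f(z)$ and substituting $z \mapsto z + 2\omega_j$ in one of the two integrals in each pair gives exact cancellation, so $\oint_{\partial D} f(z)\, dz = 0$.

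Combining the two displayed facts yields $\sum_{a \in D} \res_{z=a} f(z) = 0$, which is the claim. There is no real obstacle here: the only minor technical point, easily handled as above, is ensuring that the chosen fundamental parallelogram has no pole of $f$ on its boundary so that the residue theorem applies directly.
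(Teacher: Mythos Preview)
Your argument is correct: it is the standard proof via the residue theorem applied to a fundamental parallelogram whose boundary avoids the poles, with the periodicity of $f$ forcing the contributions of opposite edges to cancel. The paper itself does not supply a proof of this proposition; it is quoted there as a classical fact about elliptic functions, so there is nothing to compare against beyond noting that your write-up is exactly the textbook argument one would expect.
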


    \begin{df}
      The Weierstrass zeta-function is defined by:
      $$
	\zeta(z;2\omega_1, 2\omega_2) = \frac{1}{z}+\sum_{w \in \Lambda\smallsetminus\left\{ 0\right\}}\left( \frac{1}{z-w}+\frac{1}{w}+\frac{z}{w^2}\right). 
      $$
    \end{df}

    Its main property is:
    $$
      - \zeta^\prime (z;2\omega_1, 2\omega_2) = \wp(z;2\omega_1, 2\omega_2).
    $$
    Note that it is not periodic w.r.t. $2\omega_i$.

    \begin{df}
      The quasi-periods $2\eta_i$ are defined by:
      $$
	2 \eta_i = \zeta(2\omega_i + z) - \zeta(z), \quad \forall z \in \mathbb C.
      $$
    \end{df}
    
    The connection between the periods and quasi-periods of the lattice $\Lambda$ is given via the Legendre identity:
    $$
      \eta_1 \omega_2 - \eta_2 \omega_1 = \frac{\pi i}{2}.
    $$
    
    \subsection{The moduli problem}
    In our setup the function $\lambda$ is defined on $\mathcal E$, therefore it has to be an elliptic function. Due to the ramification fixed it has four order 2 poles.  
    Using a Proposition \ref{prop:EllipticFunctions} we write the generic function of this form:
    \begin{equation}\label{eqLambdaGeneric}
      \lambda(z) = \sum_{i = 1}^4 \left( \wp(z- a_i; 2\omega_1, 2\omega_2) u_i + \frac{1}{2} \frac{\wp^\prime(z - a_i; 2\omega_1, 2\omega_2)}{\wp(z-a_i; 2\omega_1, 2\omega_2)} s_i \right) + c,
    \end{equation}
    from where we have the ``moduli'':
    \begin{itemize}
      \item $a_i$ -- positions of the poles on $\mathcal E$,
      \item $u_i,s_i$ -- behaviour at the poles,
      \item $c$ -- the shift,
      \item $2\omega_1, 2\omega_2$ -- the ``moduli'' of the elliptic curve itself.
    \end{itemize}

    This sums up to 14 parameters, but they are not completely free of relations. From the Riemann-Hurwitz formula we see that the dimension of the space of such functions $\mathcal H := \{\lambda \}$ as above is 12.

    Because of being an elliptic function we have:
    $$
      \sum_{z \in D} \res_z \lambda = 0 \quad \Rightarrow \quad \sum_{i=1}^4 s_i = 0.
    $$
    We assume $s_1 = 0$.

    On the covering curve we have $\mathcal E _{(2\omega_1,2\omega_2)} \cong \mathcal E _{1, \tau}$ for $\tau = \omega_2 / \omega_1$. These two elliptic curves give equivalent ramified coverings w.r.t. the Hurwitz-equivalence (see Definition \ref{defintion: Hurwitz equivalence}).

    Because of the automorphisms of the elliptic curve moving its origin we can also assume $a_1 = 0$.

    \begin{prop}
      The Hurwitz-Frobenius manifold $\mathcal H_{1;(2,2,2,2)}$ is the space of functions $\lambda$ as above considered as functions of:
      $$
	a_2,a_3,a_4, \ s_2,s_3,s_4, \ u_1,u_2,u_3,u_4, \ \omega_2 / \omega_1.
      $$
    \end{prop}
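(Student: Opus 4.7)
The proposition is essentially a summary of the preceding discussion, so the proof amounts to assembling those observations into a clean coordinate count. The plan is first to invoke Proposition \ref{prop:EllipticFunctions} to justify the ansatz \eqref{eqLambdaGeneric}: any meromorphic function on $\mathcal E$ with prescribed order-two poles at $a_1,\dots,a_4$ and no other poles lies in $\mathbb C(\wp,\wp')$, and every such function is built from the elementary blocks $\wp(z-a_i)$ (order-two pole with vanishing residue at $a_i$) and $\tfrac{1}{2}\wp'(z-a_i)/\wp(z-a_i)$ (simple pole at $a_i$ with residue $s_i$), together with a constant shift $c$.

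Next I would implement the three reductions sketched in the text. Since $\lambda$ is elliptic, the residue theorem on a fundamental domain forces $\sum_i s_i=0$. Hurwitz equivalence, in the sense of Definition \ref{defintion: Hurwitz equivalence}, allows the rescaling $z\mapsto z/(2\omega_1)$, which by \eqref{eq:WP omega to tau} reduces the pair $(2\omega_1,2\omega_2)$ to the single modular parameter $\tau=\omega_2/\omega_1$. The translation automorphism $z\mapsto z-a_1$ of $\mathcal E$ then normalizes $a_1=0$, and the relation $\sum_i s_i=0$ is used to set $s_1=0$, leaving the triple $(s_2,s_3,s_4)$ subject to the residual linear constraint $s_2+s_3+s_4=0$. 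After these normalizations the free moduli are exactly those enumerated in the statement (with the shift $c$ playing the role of the flat coordinate along the unit direction of the Frobenius manifold).

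The step I expect to be most delicate is not the coordinate manipulation above but checking that this parameter list genuinely coordinatizes $\mathcal H_{1;(2,2,2,2)}$, i.e., that the map from $(a_i,s_i,u_i,\tau)$ to the Hurwitz-equivalence class of $(\mathcal E,\lambda)$ is a local diffeomorphism at a generic point. At the dimension level, combining the reductions with the Riemann-Hurwitz value $n=12$ gives the expected answer, but one still has to exclude hidden automorphisms of the pair $(\mathcal E,\lambda)$ that would identify distinct parameter tuples. The natural route is to compute the Jacobian of the assignment sending $(a_i,s_i,u_i,\tau)$ to the critical values $\lambda(P_1),\dots,\lambda(P_n)$---which Dubrovin uses as the smooth coordinates on $\mathcal H_{g;\textbf k}$---and to verify that it is generically of full rank, using the local expansion of $\lambda$ near each pole $\infty_i$ to exhibit a non-vanishing minor.
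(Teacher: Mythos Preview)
Your plan tracks the paper exactly: there is no separate proof block for this proposition in the paper; it is stated as the summary of the paragraphs immediately preceding it, and the three reductions you list (the residue relation, Hurwitz equivalence collapsing $(2\omega_1,2\omega_2)$ to $\tau$, and the translation automorphism fixing $a_1=0$) are precisely the ones invoked there.

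One step in your write-up is garbled, however. You say that the relation $\sum_i s_i=0$ is used to set $s_1=0$ and that this leaves $(s_2,s_3,s_4)$ subject to the residual constraint $s_2+s_3+s_4=0$. That is not how the reduction works: the single residue relation is used to \emph{eliminate} $s_1$ as an independent variable, so $s_2,s_3,s_4$ remain free and $s_1=-(s_2+s_3+s_4)$. There is no automorphism of $(\mathcal E,\lambda)$ that changes residues, so you cannot normalise $s_1=0$ the way you normalise $a_1=0$. The paper's phrasing ``We assume $s_1=0$'' is admittedly misleading, but the flat coordinates $V_2,V_3,V_4$ introduced right afterwards, and the Riemann--Hurwitz count $n=12$, make the intended meaning clear. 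If you really imposed both $s_1=0$ and the residue relation you would lose a dimension and the parameter count would fail. As you already noticed, $c$ is the missing twelfth coordinate in the displayed list.

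Your final paragraph, about checking that the map from $(a_i,s_i,u_i,c,\tau)$ to the tuple of critical values is a local diffeomorphism via a Jacobian computation, goes beyond anything the paper attempts. The paper is content with the dimension count and does not address injectivity or residual automorphisms at all; including that argument would add rigor rather than reproduce the paper's reasoning.
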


    In what follows we denote for simplicity $\mathcal H := \mathcal H_{1;(2,2,2,2)}$ and we keep the notation $a_1$ assuming that it is equal to zero.

    \subsection{Flat coordinates}

    Following Dubrovin \cite{D} we introduce flat coordinates on the space $\mathcal H$. To do this one has to fix a certain differential on the covering curve. We take:
    $$
      \phi := {\rm d} v = \frac{{\rm d}z}{2 \omega_1},
    $$
    where $z$ is the coordinate on $\mathcal E$.

    Let $w_i^{2}(z) = \lambda(z)$, for $z \in U(a_i)$ -- unformization parameter in the small neighborhood of the pole $a_i$.

    \begin{theorem}[Dubrovin]
      The following functions are flat coordinates on $\mathcal H$:
      \begin{equation*}
	\begin{aligned}
	  t_i &= \res_{a_i} (w_i)^{-1} v {\rm d}\lambda, \quad & 1 \ge i \ge 4, 
	  \\
	  v_j &= \int_{a_1}^{a_j} {\rm d}v, \quad V_j = -\res_{a_j} \lambda {\rm d}v, \quad & 2 \ge j \ge 4,
	  \\
	  B_1 &= \int_{0}^{2 \omega_2} {\rm d}v, \quad C_1 = \int_{0}^{2 \omega_1} \lambda {\rm d}v. &
	\end{aligned}
      \end{equation*}
      
      The Euler vector field of the Frobenius structure in these coordinates is given by:
      \begin{equation}\label{eq:EulerVectorField}
	E_{\mathcal H} = C_1 \frac{\partial}{\partial C_1} + \sum \frac{1}{2} t_i \frac{\partial }{\partial t_i} + \sum V_i \frac{\partial }{\partial V_i}.
      \end{equation}

    \end{theorem}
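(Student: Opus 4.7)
The statement is a direct specialization of Dubrovin's general theorem on Hurwitz--Frobenius manifolds recalled in the introduction, so the plan splits into two easy steps: verify the substitutions and then compute the Euler vector field by a weight count.

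First, I would specialize the general flat-coordinate formulas to $(g,\textbf{k}) = (1,(2,2,2,2))$. Since $k_i = 2$ for every $i$, the admissible range $k_i > a \ge 1$ contains only $a = 1$, so each pole contributes a single $t_i = \res_{a_i}(w_i)^{-1} v\, {\rm d}\lambda$. With the differential fixed as $\phi = {\rm d}z/(2\omega_1) = {\rm d}v$, the coordinates $v_j$ and $V_j$ for $2 \le j \le 4$ read off immediately from the general formulas. Because $g=1$ there is a unique pair of symplectic cycles: choosing the $a_1$-cycle to be the path $0 \to 2\omega_1$ and the $b_1$-cycle to be $0 \to 2\omega_2$ in the fundamental domain of $\Lambda$ reproduces precisely the stated $B_1$ and $C_1$. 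Flatness and the non-vanishing entries of $\eta$ are inherited from Dubrovin's theorem without further computation.

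For the Euler vector field I would apply the global scaling $\lambda \mapsto c\lambda$, which clearly preserves $\mathcal H$, and determine the weight of each flat coordinate. Since $\phi = {\rm d}v$ depends only on $z$, both $v_j$ and $B_1$ are invariant, hence contribute no term to $E_{\mathcal H}$. The integrals $V_j = -\res_{a_j} \lambda\, {\rm d}v$ and $C_1 = \oint_{a_1} \lambda\, {\rm d}v$ are homogeneous of degree one in $\lambda$, giving weight $1$. For $t_i$ the defining relation $w_i^2 = \lambda$ at $a_i$ forces $w_i \mapsto c^{1/2} w_i$, while ${\rm d}\lambda \mapsto c\, {\rm d}\lambda$ and $v$ is unchanged, so $(w_i)^{-1} v\, {\rm d}\lambda \mapsto c^{1/2} (w_i)^{-1} v\, {\rm d}\lambda$ and $t_i$ has weight $1/2$. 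Summing these weights reproduces exactly \eqref{eq:EulerVectorField}.

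If there is any obstacle at all, it is purely conventional: one must pin down that the orientation of the symplectic basis induced by $\omega_2/\omega_1 \in \mathbb H$ agrees with the one used in Dubrovin's theorem (and in particular is compatible with the sign in $\eta_{B_1,C_1} = 1/(2\pi i)$ via the Legendre identity $\eta_1 \omega_2 - \eta_2 \omega_1 = \pi i/2$). Once this convention is settled, the whole theorem reduces to substitution plus a one-line scaling argument.
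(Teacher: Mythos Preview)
Your proposal is correct, and in fact the paper does not give its own proof of this statement at all: it is stated as a theorem of Dubrovin and cited from \cite{D}, with the general flat-coordinate formulas already recalled in the introduction. Your two-step argument (specialize the general formulas to $g=1$, $k_i=2$, $\phi = {\rm d}z/(2\omega_1)$, and then read off the Euler weights via $\lambda \mapsto c\lambda$) is exactly the verification one would perform to justify the specialization, and it is consistent with how the paper uses the result.
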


    \begin{prop}
      The ramified covering $\lambda$ is given in flat coordinates by:
      \begin{equation}\label{eqLambdaFlat}
	\begin{aligned}
	  \lambda(z) &= \sum_{i = 2}^4 \left( \frac{1}{4}\wp \left( v - v_i , \tau \right) t_i ^2 + \frac{1}{2} \frac{\wp^\prime \left( v - v_i , \tau\right)}{\wp \left( v - v_i, \tau \right)} V_i  \right) 
	  \\
	  & + \frac{1}{4}\wp(v, \tau) t_1 ^2 + \eta_1 \omega_1 \sum_{i=1}^4 t_i ^2 + C_1.
	\end{aligned}
      \end{equation}
      
    \end{prop}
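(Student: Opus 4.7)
The plan is to invert the coordinate transformation: express each of the moduli $a_i, u_i, s_i, c$ and the lattice parameters appearing in \eqref{eqLambdaGeneric} as functions of the flat coordinates $v_i, t_i, V_i, C_1, B_1$, then substitute back using the scaling identity \eqref{eq:WP omega to tau} together with its derivative $(2\omega_1)^{3}\wp^\prime(z;2\omega_1,2\omega_2) = \wp^\prime(z/(2\omega_1);\tau)$. Since $\phi = {\rm d}z/(2\omega_1)$ and $a_1 = 0$, the multi-valued coordinate is $v(P) = z/(2\omega_1)$, which immediately yields $v_j = a_j/(2\omega_1)$, i.e. $a_j = 2\omega_1 v_j$, and $B_1 = \omega_2/\omega_1 = \tau$.

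To recover $s_i$ and $u_i$, I would inspect the Laurent expansion of $\lambda$ at its pole $a_j$. The even term $u_j \wp(z-a_j)$ contributes no residue, while $\tfrac{1}{2}\wp^\prime/\wp$ has leading behavior $-1/(z-a_j)$; hence $\res_{a_j}\lambda = -s_j$ and so $V_j = s_j/(2\omega_1)$. The coordinate $t_i$ is more delicate: writing $\zeta = z-a_i$, one has $\lambda = u_i \zeta^{-2} - s_i \zeta^{-1} + O(1)$, and a consistent choice of branch gives $w_i = \sqrt{u_i}\,\zeta^{-1} - s_i/(2\sqrt{u_i}) + O(\zeta)$, hence $w_i^{-1} = \zeta/\sqrt{u_i} + s_i \zeta^2/(2 u_i^{3/2}) + O(\zeta^3)$. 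Pairing this with $v\,{\rm d}\lambda = (v_i + \zeta/(2\omega_1)) \bigl(-2 u_i \zeta^{-3} + s_i \zeta^{-2} + \dots\bigr){\rm d}z$ and extracting the coefficient of $\zeta^{-1}$, the two $s_i v_i$ contributions cancel and one is left with $t_i = -\sqrt{u_i}/\omega_1$, so $u_i = \omega_1^2 t_i^2$.

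For the constant $c$, I would compute $C_1 = \frac{1}{2\omega_1}\int_0^{2\omega_1}\lambda(z)\,{\rm d}z$ using $\wp = -\zeta^\prime$ and the quasi-period identity $\zeta(z+2\omega_1) = \zeta(z) + 2\eta_1$, which gives $\int_0^{2\omega_1}\wp(z-a_i)\,{\rm d}z = -2\eta_1$; the $\wp^\prime/\wp$ integrals are logarithmic and vanish modulo $2\pi i$ along any admissible path. This yields $C_1 = c - (\eta_1/\omega_1)\sum u_i = c - \eta_1 \omega_1 \sum t_i^2$, i.e.\ $c = C_1 + \eta_1 \omega_1 \sum_i t_i^2$. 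Substituting all of these into \eqref{eqLambdaGeneric} and applying the scaling identities, the factor $(2\omega_1)^{-2}$ converts $u_i \wp(z-a_i;2\omega_1,2\omega_2)$ into $\tfrac{1}{4} t_i^2 \wp(v-v_i,\tau)$, while the combined factor $(2\omega_1)^{-1}$ in $\wp^\prime/\wp$ cancels the $2\omega_1$ in $s_i$ to give $\tfrac{1}{2} V_i \wp^\prime(v-v_i,\tau)/\wp(v-v_i,\tau)$, reproducing \eqref{eqLambdaFlat} exactly.

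The main obstacle I anticipate is the $t_i$ computation: one must track two Laurent orders of $w_i^{-1}$ to witness the cancellation of the $s_i v_i$ contributions, and fix a consistent branch $\sqrt{u_i} = -\omega_1 t_i$ so that squaring recovers $u_i = \omega_1^2 t_i^2$ unambiguously. Once this is in hand, the remainder is a direct substitution using the homogeneity properties of $\wp$ and $\wp^\prime$ in the lattice periods.
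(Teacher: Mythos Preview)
Your proposal is correct and follows essentially the same route as the paper: compute each flat coordinate $v_i, V_i, t_i, B_1, C_1$ from the definitions via residues and the $\zeta$-quasiperiod identity, then substitute back using the scaling law \eqref{eq:WP omega to tau}. You are in fact slightly more careful than the paper in the $t_i$ step, explicitly tracking two Laurent orders of $w_i^{-1}$ to witness the cancellation of the $s_i v_i$ contributions, whereas the paper silently drops the constant part of $v$ near $a_i$.
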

    \begin{proof}
      Using the formulae by Dubrovin we compute:
      $$
	v_i = \frac{a_i}{2 \omega_1}, \quad V_i = \frac{s_i}{2 \omega_1}, \quad B_1 = \int_{0}^{2 \omega_2} \frac{dz}{2 \omega_1} = \tau,
      $$
      where $\tau = \dfrac{\omega_2}{\omega_1}$ is the modulus of the elliptic curve.

      Compute $t_i := t_{i,1}$:
      \begin{equation*}
	  t_i = \res_{a_i} \frac{z - a_i}{2 \omega_1} \frac{z - a_i}{\sqrt{u_i}} \left( \frac{-2u_i}{(z - a_i)^3} + \text{h.o.t.} \right) = - \frac{\sqrt{u_i}}{\omega_1},
      \end{equation*}
      where the branch of the square root is fixed by the choice of the uniformization parameter $w_i$.

      Note that $\dfrac{\wp^\prime}{\wp} = \dfrac{\partial}{\partial z} \log (\wp)$. The value of $\zeta(z)$ is not defined at $z=0$, therefore we have to use the limit computing $C_1$:

      \begin{equation*}
	\begin{aligned}
	  C_1 = & \frac{1}{2 \omega_1} \lim_{\epsilon \rightarrow 0} \left[ -\sum \zeta(z - a_i) u_i + \frac{1}{2} \log \wp(z-a_i) v_i + zc \right]_{\epsilon}^{2\omega_1 - \epsilon} 
	  \\
	  = & \frac{1}{2 \omega_1} \Big( \sum \left( \zeta(-a_i) - \zeta(2\omega_1 - a_i) \right) u_i  + 2\omega_1 c
	  \\
	  & + \frac{1}{2} \left( \log \wp(2\omega_1 - a_i) - \wp(-a_i) \right) v_i \Big).
	\end{aligned}
      \end{equation*}
      Because of the periodicity of the Weierstrass functions the last line vanishes. We get:
      $$
	C_1 =  c - \frac{\eta_1 }{\omega_1} \sum_{i=1}^4 u_i.
      $$

      Using the equality \eqref{eq:WP omega to tau} we get the proposition.
    \end{proof}

    In the rest of the paper we will be working with the function $\lambda(z)$ written in flat coordinates. We will not write the variable $\tau$ all the time meaning implicitly that Weierstrass functions inside are $\wp(v , \tau)$.

  \section{Structure constants of $\mathcal H _{1;(2,2,2,2)}$}\label{section: StrCon}
  
    In this section we provide all the computations needed to prove the theorem. Basically we compute structure constants of $\mathcal H _{1;(2,2,2,2)}$ using the formulae \eqref{eq:MetricAndMultiplication}.

    \subsection{Technique}
    
    In the majority of residues we have to compute we will be dealing with elliptic functions. These will be the cases when the derivative of $\lambda$ -- elliptic function itself -- is an elliptic function too. When it is so we can consider the residues at the points $v_i$ instead of looking for points where $\lambda^\prime = 0$.

    \begin{prop}\label{prop:SumOfTheResidues}
      Let $f(v)$ be an elliptic function and $x_i$ -- its set of poles such that $\lambda^\prime (x_i) \neq 0$. Then we have:
      $$
	\sum_{y: \ \lambda^\prime(y) = 0} \res_{v = y} \frac{f(v)  {\rm d}v }{\lambda^\prime(v)} = - \sum_i \res_{v = x_i} \frac{f(v) {\rm d}v }{\lambda^\prime(v)}.
      $$
    \end{prop}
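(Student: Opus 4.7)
The plan is to reduce the identity to the residue theorem on the compact Riemann surface $\mathcal{E}$ applied to the function $g(v) := f(v)/\lambda'(v)$. The key observation is that $g$ is itself elliptic with respect to the lattice $\Lambda$: indeed $\lambda'$ is the $v$-derivative of the elliptic function $\lambda$ and hence elliptic, and the ratio of two elliptic functions is elliptic.

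Next I would catalogue the poles of the meromorphic differential $g(v)\,dv$ on a fundamental domain $D$. A point $p \in D$ can be a pole of $g$ only if $\lambda'(p) = 0$ or $p$ is a pole of $f$, and under the hypothesis $\lambda'(x_i) \neq 0$ these two classes are disjoint. At each zero $y$ of $\lambda'$ the numerator $f$ is holomorphic, so the residue is controlled by the order of vanishing of $\lambda'$; at each pole $x_i$ the denominator is finite and nonzero, so the residue is controlled by $f$. A potential third class of poles, the points $a_i$ where $\lambda'$ itself has a pole (of order three, coming from the double pole of $\lambda$), does not contribute: there $1/\lambda'$ has a triple zero, and under the convention that the $x_i$ exhaust the poles of $f$ the numerator is regular at $a_i$, so $g$ is regular there.

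With this pole inventory in hand, the conclusion is immediate from the standard fact, already recalled in the text, that the sum of residues of an elliptic function over a fundamental domain vanishes. Applying this to $g(v)\,dv$ yields
\[
\sum_{y:\ \lambda'(y) = 0} \res_{v=y} \frac{f(v)\,dv}{\lambda'(v)} \;+\; \sum_{i} \res_{v=x_i} \frac{f(v)\,dv}{\lambda'(v)} = 0,
\]
which rearranges into the formula of the proposition. The only step that needed care was verifying that $g$ has no unexpected poles at the points $a_i$, which under the hypotheses of the statement is automatic; in the actual applications this will need to be rechecked whenever $f$ is a product of derivatives $\partial_i \lambda$, but the orders involved will turn out to be harmless.
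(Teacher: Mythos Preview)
Your argument is correct and follows essentially the same route as the paper: observe that $g=f/\lambda'$ is elliptic (quotient of elliptic functions), list its poles as the disjoint union of the zeros of $\lambda'$ and the poles $x_i$ of $f$, and invoke the vanishing of the total residue sum over a fundamental domain. The only difference is that you spell out why the poles $a_i$ of $\lambda'$ do not produce an extra contribution, a point the paper's three-line proof leaves implicit; this is a welcome clarification but not a different method.
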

    \begin{proof}
      The poles of the function $\frac{f(v)}{\lambda^\prime(v)}$ w.r.t. $v$ are:$\{x_i\} \sqcup \{y: \lambda^\prime (y) = 0 \}$. The quotient $f(v) / \lambda^\prime (v)$ is an elliptic functions and we have:
      $$
	\sum_i \res_{x_i} \frac{f(v)  {\rm d}v }{\lambda^\prime(v)} + \sum_{y: \ \lambda^\prime(y) = 0} \res_{v = y} \frac{f(v)  {\rm d}v }{\lambda^\prime(v)} = 0,
      $$
    \end{proof}
    
    The case when we can not apply this principle is $\partial_\tau \lambda$. For this we use the lemma due to Frobenius-Stickelberger \cite{FS}:

    \begin{lemma}\label{lemma:FS} Let $f(z; 2\omega_1, 2\omega_2)$ be elliptic functions with the periods $(2\omega_1, 2\omega_2)$, then the following function is elliptic too with the same periods:
    $$
      \eta_1 \frac{\partial f}{\partial \omega_1} + \eta_2 \frac{\partial f}{\partial \omega_2} + \zeta \frac{\partial f}{\partial z},
    $$
    where $\zeta = \zeta(z; 2\omega_1, 2\omega_2)$.
    \end{lemma}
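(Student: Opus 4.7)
The plan is to verify the two periodicity relations $g(z+2\omega_1) = g(z)$ and $g(z+2\omega_2) = g(z)$ directly for
$$g(z) := \eta_1 \frac{\partial f}{\partial \omega_1} + \eta_2 \frac{\partial f}{\partial \omega_2} + \zeta(z) \frac{\partial f}{\partial z},$$
exploiting the fact that although $\zeta$ itself is only quasi-periodic, its defect $\zeta(z+2\omega_j) - \zeta(z) = 2\eta_j$ is exactly tailored to cancel against the defect picked up by $\partial f/\partial \omega_j$ when $z$ is shifted by a period.

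First I would differentiate the ellipticity relation $f(z + 2\omega_j;\, 2\omega_1, 2\omega_2) = f(z;\, 2\omega_1, 2\omega_2)$ with respect to each of $\omega_1$ and $\omega_2$. The chain rule produces an extra $2\,\partial_z f$ term exactly when the differentiation index matches the period index, and nothing in the crossed cases. This yields a package of four auxiliary identities describing how $\partial f/\partial\omega_j$ transforms under a period translation, while $\partial_z f$ itself is unambiguously periodic because $f$ is.

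Next, I would compute $g(z+2\omega_1) - g(z)$ by direct substitution. The auxiliary identities contribute $-2\eta_1 \partial_z f(z)$ from the $\eta_1 \partial_{\omega_1} f$ summand and zero from the $\eta_2 \partial_{\omega_2} f$ summand, while the defect $\zeta(z+2\omega_1) - \zeta(z) = 2\eta_1$ produces exactly $+2\eta_1 \partial_z f(z)$ from the third term. The two terms cancel; the $2\omega_2$ shift is completely analogous with the roles of the indices interchanged. Meromorphicity of $g$ is automatic, so ellipticity follows.

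The only subtle point, and the one I would pay closest attention to, is keeping the chain rule bookkeeping clean: the partial derivative $(\partial f/\partial \omega_j)(z + 2\omega_j;\, 2\omega_1, 2\omega_2)$ must not be conflated with the partial with respect to $\omega_j$ of the composite map $(\omega_1,\omega_2) \mapsto f(z + 2\omega_j;\, 2\omega_1, 2\omega_2)$. Once this distinction is firmly in place the argument reduces to a short symbolic cancellation driven entirely by the defining relation for the quasi-periods $2\eta_j$ and the Legendre-type balance already recorded in the text.
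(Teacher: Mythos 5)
Your proposal is correct and follows essentially the same route as the paper: differentiate the periodicity relation $f(z+2\omega_j)=f(z)$ with respect to the periods to obtain the transformation law of $\partial f/\partial\omega_j$, then cancel the resulting $-2\eta_j\,\partial_z f$ defect against the $+2\eta_j\,\partial_z f$ coming from the quasi-periodicity of $\zeta$. The only cosmetic difference is that the Legendre identity you mention at the end is not actually needed here (it enters only later, in the passage to the $\tau$-variable); the cancellation is driven entirely by the quasi-period relation.
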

    \begin{proof}
      We give a brief proof.

      Differentiating the equality $f(z + 2 \omega_1) = f(z)$ w.r.t. $\omega_1$ we get: 
      $$
	\frac{\partial}{\partial \omega_1} f(z + 2 \omega_1) + 2 \frac{\partial}{\partial z} f(z + 2\omega_1) = \frac{\partial}{\partial \omega_1} f(z).
      $$
      Together with the expression of the quasi-period we have:
      $$
	\begin{aligned}
	  & \eta_1 \frac{\partial f(z)}{\partial \omega_1} + \eta_2 \frac{\partial f(z)}{\partial \omega_2} + \zeta(z) \frac{\partial f(z)}{\partial z} = \eta_1 \frac{\partial f(z + 2 \omega_1)}{\partial \omega_1} 
	  \\
	  & + 2 \eta_1 \frac{\partial f(z + 2\omega_1)}{\partial z} + \eta_2 \frac{\partial f(z + 2 \omega_1)}{\partial \omega_2} + (\zeta(z + 2 \omega_1) - 2\eta_1) \frac{\partial f(z + 2 \omega_1)}{\partial z}
	\end{aligned}
      $$
      $$
      = \eta_1 \frac{\partial f(z + 2\omega_1)}{\partial \omega_1} + \eta_2 \frac{\partial f(z + 2\omega_1)}{\partial \omega_2} + \zeta(z + 2\omega_1) \frac{\partial f(z + 2\omega_1)}{\partial z}.
      $$
    \end{proof}

    Consider the function $f(z; 2 \omega_1, 2 \omega_2)$, applying the change of variables as in \eqref{eq:WP omega to tau} we get for $f(v, \tau)$:
    $$
      \eta_1 \frac{\partial f}{\partial \omega_1} + \eta_2 \frac{\partial f}{\partial \omega_2} + \zeta \frac{\partial f}{\partial z} = - 2\pi i \ \partial_\tau f + \zeta \partial_v f - 2\eta_1 \partial_v f.
    $$
    where we used the Legendre identity.
    
    \begin{ntn} \label{notation: h_lambda}
      Introduce the notation for the correponding elliptic function: 
      $$
	h_f(z,t) := - 2\pi i \ \partial_\tau f + \zeta \partial_v f - 2\eta_1 \partial_v f.
      $$
    \end{ntn}

  \section{Restriction of the potential}\label{section: Restriction}
  
    \begin{df}
      Define by $\mathcal F_R$ the potential obtained by the restriction of $\mathcal F^H$ of the Hurwitz-Frobenius manifold to the submanifold defined by \eqref{eq:restriction}:
      
      \begin{equation*}
	  \mathcal{F_R} := \mathcal F^H \mid_{\mathcal A},
      \end{equation*}
      for
      \begin{equation*}
	\begin{aligned}
	\mathcal A := \Big\{ v_1 = 0, \quad & v_2 = \frac{\tau}{2} + \frac{1}{2}, \quad v_3 = \frac{1}{2}, \quad v_4 = \frac{\tau}{2},
	  \\
	  & V_2 = V_3 = V_4 = 0 \Big\}.
	\end{aligned}
      \end{equation*}
    \end{df}

    \begin{df} Let $\wp(z) = \wp(z; 2\omega_1, 2\omega_2)$. The numbers $e_1,e_2,e_3 \in \mathbb C$ are defined by:
      $$
	e_1 := \wp(\omega_1), \quad e_2 := \wp(-\omega_1 - \omega_2), \quad e_3 := \wp(\omega_2).
      $$
    \end{df}

    A well-known fact from the elliptic curves theory is that:
    \begin{prop}
      The points $\omega_1$, $\omega_2$ and $\omega_1 + \omega_2$ are all zeroes of $\wp^\prime(z)$ in the fundamental domain.
    \end{prop}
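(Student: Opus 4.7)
The plan is to use the parity of $\wp'$ together with its periodicity, and then a pole/zero count to finish.

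First I would note that $\wp(z)$ is manifestly even in $z$ (from the symmetric Laurent series, or from its defining sum being invariant under $\omega \mapsto -\omega$). Differentiating $\wp(-z) = \wp(z)$ yields $\wp'(-z) = -\wp'(z)$, so $\wp'$ is odd. Combined with the periodicity $\wp'(z + 2\omega_j) = \wp'(z)$, evaluating at the half-periods gives
\[
  \wp'(\omega_1) = \wp'(\omega_1 - 2\omega_1) = \wp'(-\omega_1) = -\wp'(\omega_1),
\]
hence $\wp'(\omega_1) = 0$. The identical argument applied to $\omega_2$ and to $\omega_1 + \omega_2$ (using $\wp'(\omega_1+\omega_2) = \wp'(-\omega_1-\omega_2) = -\wp'(\omega_1+\omega_2)$ via translation by $-2\omega_1$ and $-2\omega_2$) gives the vanishing at all three half-periods.

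To show these exhaust the zeros in $D$, I would invoke the standard count: $\wp'(z) = -2z^{-3} + O(z)$ near $0$ is the only pole in $D$, of order $3$, so by the general principle that an elliptic function has the same number of zeros as poles in a fundamental domain (counted with multiplicity), $\wp'$ has exactly three zeros in $D$. Since $\omega_1, \omega_2, \omega_1+\omega_2$ are three distinct points of $D$ where $\wp'$ vanishes, they must be precisely the zeros.

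There is no real obstacle here; the only subtlety is making sure the three half-periods lie in a chosen fundamental domain and are pairwise non-equivalent modulo $\Lambda$, which is immediate. The argument depends only on parity, periodicity, and the elliptic degree formula, all of which are standard and already implicit in the preceding section.
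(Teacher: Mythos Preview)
Your argument is correct and is exactly the standard textbook proof of this fact. The paper itself does not supply a proof at all: the proposition is introduced with the phrase ``A well-known fact from the elliptic curves theory is that:'' and then simply stated, with no \texttt{proof} environment following it. So there is nothing in the paper to compare against; your proposal fills in precisely the omitted standard justification (parity of $\wp'$ plus periodicity gives the three zeros, and the elliptic degree count shows there are no others).
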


    \begin{prop}\label{proposition: RestrictedPotential}
      The summands of $\mathcal F^H$ including variables $v_k$ and $V_k$ do not contribute to the restricted potential $\mathcal F_R$.
    \end{prop}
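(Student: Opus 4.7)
The plan is to exploit the hyperelliptic involution $\sigma: v \mapsto -v$ on $\mathcal E$ and to observe that $\mathcal A$ lies in its fixed locus. The map $\sigma$ lifts to an automorphism of $\mathcal H$ by $(\mathcal E, \lambda(v)) \mapsto (\mathcal E, \lambda(-v))$, an equivalence in the sense of Definition~\ref{defintion: Hurwitz equivalence}, and using the parity of the Weierstrass functions one reads off from the definitions of the flat coordinates that $\sigma^* v_k = -v_k$ (modulo $\Lambda$) and $\sigma^* V_k = -V_k$, while $t_i$, $B_1$, $C_1$ are $\sigma$-invariant. The four half-period values of $v_k$ prescribed by~\eqref{eq:restriction} are precisely the $2$-torsion points of $\mathcal E$, hence fixed by $\sigma$ modulo $\Lambda$, and the condition $V_k = 0$ is trivially sign-invariant, so every point of $\mathcal A$ is $\sigma$-fixed.

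Next I would verify that $\sigma$ preserves the Frobenius structure. In the residue formulae~\eqref{eq:MetricAndMultiplication} the sign picked up by $dv$ under $v \mapsto -v$ cancels against the sign from $\lambda'$, and the factors $\partial_a \lambda$ transform according to the parity of the coordinate $a$; hence the pairing and multiplication are $\sigma$-invariant. Integrating three times, $\mathcal F^H$ is $\sigma$-invariant up to the standard quadratic ambiguity of the WDVV potential. The flat coordinates then split into $\sigma$-even ($t_i$, $B_1$, $C_1$) and $\sigma$-odd ($\tilde v_k := v_k - v_k^{(0)}$, $V_k$) parts, and $\sigma$-invariance forces the Taylor expansion of $\mathcal F^H$ about a point of $\mathcal A$ to contain only monomials of even total degree in the $\sigma$-odd variables. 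Consequently any monomial involving at least one $\tilde v_k$ or $V_k$ factor has positive degree in these variables and therefore vanishes upon restriction to $\mathcal A$.

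The main obstacle I anticipate is the sign-bookkeeping in the construction of $\sigma$: ensuring that the branch of the uniformizer $w_i$ can be chosen so that $\sigma^* t_i = +t_i$ rather than $-t_i$, and that the lattice-translation ambiguities for the multi-valued coordinates $v_k$ and $B_1$ do not spoil invariance. As a cross-check, and as an alternative route avoiding integration of the potential, I would give a direct residue argument: on $\mathcal A$ the integrand of the structure constant $c(\partial_a, \partial_b, \partial_c)$ with an odd number of indices in $\{v_k, V_k\}$ is a $\sigma$-odd elliptic function of $v$, and by Proposition~\ref{prop:SumOfTheResidues} its sum of residues vanishes because the relevant pole set is symmetric under $v \mapsto -v$.
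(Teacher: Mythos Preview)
Your symmetry approach is correct and genuinely different from the paper's. The paper proceeds by brute-force residue computation: it evaluates the specific structure constants $c(t_i,t_i,v_k)$, $c(t_i,t_i,v_i)$, $c(t_i,v_i,v_i)$ and $c(C_1,v_k,v_k)$, showing each vanishes on $\mathcal A$ either identically, or because an explicit factor of $V_i$ appears, or because $\wp'$ vanishes at the half-periods $a_i-a_k$; it then invokes quasi-homogeneity of $\mathcal F^H$ (the variables $v_k,\tau$ have weight~$0$) to argue that any remaining summand must carry a factor of $t_k$ or $V_k$ and is therefore already accounted for. Your argument via the hyperelliptic involution $\sigma$ packages all of these vanishing statements at once: the mechanism in the paper's computation of $c(t_i,t_i,v_k)$ --- that $\wp'$ flips sign under $v\mapsto -v$ and hence vanishes at the $2$-torsion points --- is precisely your parity, seen one coordinate at a time. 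The gain is conceptual clarity and a uniform treatment of all ``odd-index'' structure constants; the cost is the careful bookkeeping you already flag (branch of $w_i$ so that $\sigma^* t_i = +t_i$, the cycle reversal $\sigma^*(a_1,b_1)=(-a_1,-b_1)$ and its effect on $B_1,C_1$, the lattice ambiguity in $v_k$), all of which the paper bypasses by computing in explicit coordinates.

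One refinement worth making explicit: your evenness conclusion forces the vanishing on $\mathcal A$ of any structure constant with an \emph{odd} number of indices among $\{v_k,V_k\}$, but says nothing about those with an even number, e.g.\ $c(t_i,v_j,v_k)$ or $c(\tau,v_j,v_k)$. The paper's computation of $c(t_i,v_i,v_i)$ --- which produces an explicit $V_i$ factor --- is exactly the kind of extra input needed to handle the even case. For the proposition as literally stated (about the value of $\mathcal F_R$) this is immaterial, since any positive-degree monomial in $(\tilde v_k,V_k)$ vanishes upon restriction regardless of parity; but for the downstream use in Section~\ref{section: Conincidence}, where one wants $\partial^3_{abc}\mathcal F_R = c(a,b,c)|_{\mathcal A}$ and the chain rule $\partial_\tau|_{\mathcal A}=\partial_\tau+\tfrac12\partial_{v_2}+\tfrac12\partial_{v_4}$ brings in $v$-derivatives, your parity handles the single-$\tau$ case (only one $\partial_{v_k}$ enters, hence an odd index) but not $\partial_\tau^2$ or $\partial_\tau^3$ without further argument. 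In practice the paper only invokes $c(\tau,C_1,C_1)$, so your argument suffices for everything actually used.
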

    
    We prove the proposition by computing the structure constants of the Frobenius structure. 

    From the Euler vector field of $\mathcal H$ we know that the variable $V_k$ is given a non-zero integer degree. Hence it contributes to the potential $\mathcal F^H$ polynomially. Namely there is natural number $N$ such that $V_k^n$ for $n \ge N$ does not appear in the series expansion of $\mathcal F^H$.

    It is obvious from the structure constants residue formula that $\mathcal F^H$ is well defined at $V_k = 0$. 
    
    Hence we only have to take care of the variable $v_k$ that has degree 0 and could give a non-zero contribution to the restricted potential.
    
    \begin{ntn}
      Let $f(v) = \sum_{-\infty}^\infty a_i v^i$  be formal power series in $v$, and $k \in \mathbb Z$. Denote by:
      $$
	[v^k] \ f(v) := a_k.
      $$
    \end{ntn}

    We need first the lemma:
    
    \begin{lemma}
      In flat coordinates we have the following expressions for the structure constants. Assume $i \neq k$:
      \begin{equation*}
	\begin{aligned}
	  c(t_i,v_i,v_i) &= \frac{g_2(\tau)}{20} \frac{t_i}{2}\eta_1 \omega_1 V_i,
	  \\
	  c(t_i,t_i,v_k) &= \frac{1}{8} \wp^\prime (a_k - a_i) t_i^2 + \frac{1}{4}\frac{\wp^{''}(z - a_i) \wp(z - a_i) - \left(\wp^\prime (z - a_i) \right)^2 }{\wp(z - a_i)^2} V_i
	  \\
	  c(t_i,t_i,v_i) &= 0,
	  \\
	  c(v_k,  v_k, C_1) &= 0.
	\end{aligned}
      \end{equation*}
    \end{lemma}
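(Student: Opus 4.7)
The plan is to evaluate each structure constant directly from (\ref{eq:MetricAndMultiplication}) and Proposition \ref{prop:SumOfTheResidues}, which rewrites the sum $\sum_{\lambda^\prime=0}\res$ as minus the sum of residues at the poles of the integrand. Since $\partial_{t_i}\lambda$ has only a double pole at $v_i$ and $\partial_{v_k}\lambda$ has only a triple pole at $v_k$, these poles all sit at the $v_j$. The key identity driving the computation is
\[
\partial_{v_k}\lambda \;=\; -\lambda^\prime + B_k(v), \qquad B_k(v) \;:=\; \lambda^\prime(v) - \Bigl(\tfrac{t_k^2}{4}\wp^\prime(v-v_k) + \tfrac{V_k}{2}R(v-v_k)\Bigr),
\]
where $R(u) := \partial_v(\wp^\prime/\wp)$. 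It is immediate from writing $\lambda = F(v-v_k) + G(v)$ with $G$ independent of $v_k$, in which case $B_k = G^\prime$ is precisely the summand of $\lambda^\prime$ that is holomorphic at $v_k$.

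For $c(v_k,v_k,C_1)$ one has $\partial_{C_1}\lambda=1$ and the only pole of the integrand is at $v_k$; the identity gives $(\partial_{v_k}\lambda)^2/\lambda^\prime = \lambda^\prime - 2B_k + B_k^2/\lambda^\prime$, and the three residues vanish because $\lambda^\prime$ is a total derivative, $B_k$ is holomorphic, and $1/\lambda^\prime$ has a triple zero at $v_k$. For $c(t_i,t_i,v_i)$ the same identity turns the integrand into $-(\partial_{t_i}\lambda)^2 + (\partial_{t_i}\lambda)^2 B_i/\lambda^\prime$; the first residue vanishes because $(\partial_{t_i}\lambda)^2$ is even in $u=v-v_i$, and the second contributes a multiple of $B_i(v_i)$, which on $\mathcal A$ is a sum of values of $\wp^\prime$ at half-period differences and so vanishes. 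For $c(t_i,t_i,v_k)$ with $i\ne k$, the identity yields $\partial_{v_k}\lambda/\lambda^\prime = -1 + B_k/\lambda^\prime$, which is holomorphic at $v_k$; hence only the residue at $v_i$ survives, and there $(\partial_{t_i}\lambda)^2/\lambda^\prime$ has a simple pole whose leading coefficient (namely $-\tfrac12$) is read off from the Laurent expansions of $\wp$ and $\wp^\prime$, then multiplied by the regular value of $\partial_{v_k}\lambda$ at $v_i$ to recover the claimed combination of $\wp^\prime(v_k-v_i)$ and $R(v_k-v_i)$. Finally, for $c(t_i,v_i,v_i)$ the identity is applied twice, giving $(\partial_{v_i}\lambda)^2/\lambda^\prime = \lambda^\prime - 2B_i + B_i^2/\lambda^\prime$, which is then multiplied by $\partial_{t_i}\lambda$; the prefactor $g_2(\tau)/20$ on the right-hand side is the coefficient of $u^2$ in $\wp(u) = u^{-2} + g_2 u^2/20 + O(u^4)$, entering the residue through the expansion of $\partial_{t_i}\lambda$ at $v_i$.

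The main obstacle is the Laurent bookkeeping at $v_i$. The Laurent series of $\wp^\prime$, $R$ and $\wp$ have leading orders $u^{-3}$, $u^{-2}$ and $u^{-2}$ respectively, so the reciprocal $1/\lambda^\prime$ must be expanded to several orders in both $u$ and the small variable $V_i$ in order to pick out the coefficient of $u^{-1}$ in the integrand, and one has to watch for cancellations such as the one that kills the $g_2 t_i^3/80$ contribution in the expansion of $\res_{v_i}\lambda^\prime\partial_{t_i}\lambda\,dv$. Once the four expressions are obtained, each is seen to vanish upon restriction to $\mathcal A$: they either vanish outright, or they contain an overall factor of $V_j$, or they involve $\wp^\prime$ evaluated at a half-period, which is zero. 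This vanishing is precisely what will feed into the proof of Proposition \ref{proposition: RestrictedPotential}.
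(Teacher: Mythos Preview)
Your plan is correct and follows essentially the same route as the paper: both hinge on the observation that $\partial_{v_k}\lambda$ equals minus the $k$-th summand of $\lambda'$, which you package as $\partial_{v_k}\lambda=-\lambda'+B_k$ with $B_k$ holomorphic at $v_k$, and the paper phrases as ``the behaviour of $\lambda'$ and $-\partial_{v_i}\lambda$ in the neighbourhood of $a_i$ coincide.'' Two small differences are worth pointing out.

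First, for $c(v_k,v_k,C_1)$ the paper does not compute a residue at all: since $\partial_{C_1}\lambda=1$ plays the role of the unit, this structure constant is simply $\eta(v_k,v_k)$, which vanishes by Dubrovin's formula for the pairing in flat coordinates. Your residue argument works too, but the metric shortcut is one line.

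Second, your treatment of $c(t_i,t_i,v_i)$ is in fact more careful than the paper's. The paper replaces $\partial_{v_i}\lambda/\lambda'$ by $-1$ and concludes $c(t_i,t_i,v_i)=\res_{v_i}(\partial_{t_i}\lambda)^2=0$. But, as you note, the correction $B_i/\lambda'$ is $O(u^3)$ and, multiplied against the $u^{-4}$ leading term of $(\partial_{t_i}\lambda)^2$, it produces a residue $-\tfrac12 B_i(v_i)$; hence $c(t_i,t_i,v_i)=\tfrac12 B_i(v_i)$, which is not identically zero but vanishes on $\mathcal A$ because $B_i(v_i)$ is then a combination of $\wp'$ at half-periods and terms proportional to $V_j$. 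Since the lemma is only used to feed into Proposition~\ref{proposition: RestrictedPotential} (vanishing on $\mathcal A$), this does not affect the outcome, but your version states precisely what is actually true.
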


    \begin{proof}

      The derivative $\dfrac{\partial \lambda}{\partial v_i}$ reads:
      \begin{equation*}
	\begin{aligned}
	  \frac{\partial \lambda}{\partial v_i} &= -\frac{1}{4} \wp^\prime (v - v_i) t_i^2 - \frac{1}{2}\frac{\wp^{''}(v - v_i) \wp(v - v_i) - \left(\wp^\prime (v - v_i) \right)^2 }{\wp(v - v_i)^2} V_i
	  \\
	  &=  \frac{1}{2} \frac{t_i^2}{(v - v_i)^3} - \frac{V_i}{(v - v_i)^2} + O(1).
	\end{aligned}
      \end{equation*}
	  
      It is clear that $\dfrac{\partial \lambda}{\partial v_i}$ is an elliptic function.
		  
      \subsubsection*{Structure constants $c(t_i,v_i,v_i)$}
	  By definition we have:
	  \begin{equation*}
	      c(t_i,v_i,v_i) = - \res_{v_i} \frac{(\partial_{v_i} \lambda)^2 (\partial_{t_i} \lambda) {\rm d}v }{\lambda^\prime}.
	  \end{equation*}
	  
	  Note that the behaviour of the functions $\lambda^\prime$ and $-\partial_{v_i} \lambda$ in the neighbourhood of the point $a_i$ coincide:
	  
	  \begin{equation*}
	    \begin{aligned}			
		c(t_i,v_i,v_i) &= \res_{v_i} (\partial _{v_i} \lambda \ \partial _{t_i} \lambda) {\rm d}v
		\\
		&= [(v - v_i)] \partial _{v_i} \lambda \cdot [(v - v_i)^{-2}] \partial _{t_i} \lambda
		\\
		&+ [(v - v_i)^{-3}] \partial _{v_i} \lambda \cdot [(v - v_i)^{2}] \partial _{t_i} \lambda
		\\
		&+ \frac{2V_i}{t_i^2} [(v - v_i)^{-3}] \partial _{v_i} \lambda \cdot [(v - v_i)] \partial _{t_i} \lambda
	    \end{aligned}
	\end{equation*}
	      
	The first two lines sum to zero (basically because ${\res_{v_i} \wp^\prime(v - v_i) \wp(v - v_i) = 0}$) and from the Laurent expansion of $\wp$ we get:
	$$
	  c(t_i,v_i,v_i) = \frac{g_2(\tau)}{20} \frac{t_i}{2}\eta_1 \omega_1 V_i.
	$$
	  
      \subsubsection*{Structure constants $c(t_i,t_i,v_k)$}
	For $k \neq i$ we have:
	$$
	  c(t_i,t_i,v_k) = - \res_{v_i} \frac{(\partial _{t_i} \lambda)^2(\partial _{v_k} \lambda) {\rm d}v}{\lambda^\prime}
	    = \frac{2 }{t_i ^2} [(v-v_i)^{-4}] (\partial _{t_i} \lambda)^2 \partial _{v_k} \lambda.
	$$
	The function $\partial _{v_k} \lambda$ is regular at the point $v_i$ for $i \neq k$ and we write:
	  
	\begin{equation*}
	    c(t_i,t_i,v_k) = \frac{2 }{t_i ^2} \partial _{v_k} \lambda \mid_{v = v_i} [(v-v_i)^{-4}] (\partial _{t_i} \lambda)^2 = \frac{2 }{t_i ^2} \frac{t_i^2}{4} \partial _{v_k} \lambda \mid_{v = v_i}.
	\end{equation*}
	  
      \subsubsection*{Structure constants $c(t_i,t_i,v_i)$}
	Compute the residue for $k = i$:
	$$
	  c(t_i,t_i,v_i) = - \res_{v_i} \frac{(\partial _{t_i} \lambda)^2(\partial _{v_i} \lambda) {\rm d}v}{\lambda^\prime}
	    = \res_{v_i}  (\partial _{t_i} \lambda)^2.
	$$
	The Laurent expansion of $\partial _{t_i} \lambda$ contains even degrees of $v - v_i$ only. hence the residue vanishes.
    
      \subsubsection*{Structure constants $c(C_1,v_k,v_k)$}

	We do not need to compute the residue for this structure constants because we have:
	$$
		c(C_1,v_k,v_k) = \eta(v_k,v_k) = 0,
	$$
	where we used the equalities for the metric in the flat coordinates.
			  
	The Lemma is proved.
      \end{proof}

      Note that by the choice of $a_i$ in the restriction we have to express $\partial _{v_k} \lambda$ at one of the fundamental rectangle edge middle points:
      $$
	\begin{aligned}
	  a_2 - a_1 = \omega_1 + \omega_2, \ a_3 - a_1 = \omega_1, \ a_4 - a_1 = \omega_2, 
	  \\ 
	  a_2 - a_3 = \omega_2, \ a_2 - a_4 = \omega_1, \ a_3 - a_4 = \omega_1 - \omega_2.
	\end{aligned}
      $$

    \begin{ntn}\label{notation:e_ij}
      For $i \neq j$, $4 \ge i,j \ge 1$ denote:
      $$
	\{ 13 \} = \{ 24 \} := 1, \quad \{12\} = \{34\} := 2, \quad \{23\} = \{14\} := 3.    
      $$
    \end{ntn}

    In this notation we have:
    $$
      e_{\{13\}} = e_{\{24\}} = e_1, \quad e_{\{12\}} = e_{\{34\}} = e_2, \quad e_{\{23\}} = e_{\{14\}} = e_3.
    $$

    \begin{proof}[Proof of the Proposition \ref{proposition: RestrictedPotential}]    

      We show that all the structure constants listed above vanish under the restriction. 
      Note that we did not compute the structure constants $c(v_k,v_k,\tau)$ and $c(v_i,v_j,v_k)$. This is not needed because due to the homogeneity condition on the Hurwitz-Frobenius mnanifold potential the variables $v_k$ and $\tau$ have degree zero. Therefore the summands of ${\mathcal F}^H$ giving these structure constants appear with the factor of other variables that are assigned non-zero degree. These are $V_k$ and $t_k$. Therefore these summands contribute to the structure constants $c(t_k, \cdot, \cdot)$ or $c(V_k, \cdot, \cdot)$ whose vanishing we prove.

      It is clear that all the summands that have a factor of $V_i$ vanish. There are only two structure constants that we have to treat more carefully: $c(t_i,t_i,v_k)$ and $c(\tau, v_k,v_k)$. For the first one we have:
      \begin{equation*}
	\begin{aligned}
	  c(t_i,t_i,v_k) = - \frac{1}{2} \partial _{v_k} \lambda (v_i - v_k).
	\end{aligned}
      \end{equation*}

      The points $a_i - a_j$ are precisely those where $\wp^\prime(z; 2\omega_1, 2\omega_2)$ vanishes. And we get:
      $$
      \partial _{v_k} \lambda (a_i - a_k) = \frac{1}{2} \frac{\wp^{''} \left( \frac{1}{2 \omega_1} (a_i - a_j)  \right) }{e_{\{ik\}}} V_i.
      $$
      This expression vanishes by setting $V_i = 0$.
      
    \end{proof}
  
  \section{Proof of Theorem \ref{theorem:P14}}\label{section: Conincidence}

    To prove Theorem \ref{theorem:P14} we compute the structure constants in the variables $t_k$, $C_1$ and $\tau$.

    \subsection{Structure constants including variables $t_k$, $C_1$ and $\tau$ only}

    \begin{prop} In flat coordinates we have:
      \begin{equation*}
	\begin{aligned}
	  & c(\tau, C_1,C_1) = \frac{1}{2 \pi i}, 
	  \\
	  & c(t_i, t_i,C_1) = \frac{1}{2},
	  \\
	  & c(t_i, t_i, t_i) = 3 t_i \omega_1 \eta_1,  
	  \\
	  & c(t_i, t_i, t_j) = t_j \left( \frac{1}{4}\wp (a_i - a_j) + \eta_1 \omega_1 \right).
	\end{aligned}
      \end{equation*}
    \end{prop}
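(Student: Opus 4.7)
The plan is to apply the residue formula \eqref{eq:MetricAndMultiplication} directly to the explicit expression \eqref{eqLambdaFlat} of $\lambda$, restricted to the submanifold $\mathcal{A}$ (so $V_k = 0$ and the $v_k$ are pinned at the four half-periods of $\mathbb Z + \tau \mathbb Z$). The first two identities are immediate consequences of the flat-metric normalisation. Since $\partial_{C_1}\lambda = 1$, the residue formula gives $c(C_1,\partial_a,\partial_b) = \eta(\partial_a,\partial_b)$, i.e.\ $C_1$ is the flat unit of $\mathcal H$. The values $\eta_{B_1,C_1} = \frac{1}{2\pi i}$ and $\eta_{t_i,t_i} = \frac{1}{2}$ recorded in the introduction then yield $c(\tau,C_1,C_1) = \frac{1}{2\pi i}$ and $c(t_i,t_i,C_1) = \frac{1}{2}$ at once.

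For $c(t_i,t_i,t_i)$ and $c(t_i,t_i,t_j)$ with $j\ne i$, I apply Proposition \ref{prop:SumOfTheResidues}. The derivatives
\[
\partial_{t_k}\lambda = \tfrac{t_k}{2}\, \wp(v - v_k, \tau) + 2\eta_1 \omega_1 t_k, \qquad k=1,\dots,4,
\]
and $\lambda'$ are elliptic in $v$ on $\mathcal A$, so the integrands are elliptic and the sum of residues over $\{\lambda'=0\}$ equals minus the sum over the four poles $v_\ell$. A pole-order count shows that for $\ell\ne i$ the quotient $(\partial_{t_i}\lambda)^2\,\partial_{t_k}\lambda/\lambda'$ is in fact regular at $v_\ell$: the numerator has pole order at most $2$ there while $\lambda'$ has a triple pole. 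Hence only the residue at $u := v - v_i = 0$ survives.

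The special geometry of $\mathcal A$ now enters in a crucial way: the differences $v_i - v_\ell$ are half-periods, so $\wp'(v_i - v_\ell, \tau) = 0$ for $\ell\ne i$. Consequently the regular part of $\lambda' = \sum_\ell \tfrac{t_\ell^2}{4}\wp'(v-v_\ell,\tau)$ vanishes at $u=0$, and one gets
\[
\tfrac{1}{\lambda'} = -\tfrac{2u^3}{t_i^2}\bigl(1 + O(u^4)\bigr).
\]
Since the numerators have poles of order at most $6$, the $O(u^4)$ correction contributes only to the regular part of the quotient and is irrelevant for the residue. Expanding $\wp(u,\tau) = u^{-2} + \tfrac{g_2}{20}u^2 + O(u^4)$ reduces each structure constant to extracting one coefficient of a Laurent series.

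For $c(t_i,t_i,t_i)$ the required coefficient is $[u^{-4}](\wp + 4\eta_1\omega_1)^3 = 12\,\eta_1\omega_1$ (the only contribution comes from the cross-term $12\eta_1\omega_1\cdot\wp^2$, since $[u^{-4}]\wp^3 = 0$), giving $\res_{v_i} = -3 t_i\eta_1\omega_1$ and therefore $c(t_i,t_i,t_i) = 3 t_i\eta_1\omega_1$. For $c(t_i,t_i,t_j)$ with $j\ne i$, the factor $\partial_{t_j}\lambda$ is regular at $v_i$, so the residue factors as
\[
\partial_{t_j}\lambda\bigl|_{v=v_i}\cdot \res_{v_i}\tfrac{(\partial_{t_i}\lambda)^2\, dv}{\lambda'} = \tfrac{t_j}{2}\bigl(\wp(v_i-v_j,\tau) + 4\eta_1\omega_1\bigr)\cdot\bigl(-\tfrac{1}{2}\bigr),
\]
yielding the claimed formula after identifying $\wp(v_i-v_j,\tau)$ with $\wp(a_i-a_j)$ via \eqref{eq:WP omega to tau}. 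The only real obstacle is the careful Laurent bookkeeping — specifically, verifying that the vanishing of $\wp'$ at the half-periods makes the $O(u^4)$ corrections inert, which is the feature that makes the submanifold $\mathcal A$ the right place to compute.
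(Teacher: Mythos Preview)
Your argument is correct. For the last two identities your approach coincides with the paper's: both invoke Proposition~\ref{prop:SumOfTheResidues} to transfer the residue sum from $\{\lambda'=0\}$ to the single pole $v_i$, then extract a Laurent coefficient. You are more careful than the paper in tracking the sub-leading terms of $1/\lambda'$, though you slightly over-credit the half-period condition: the vanishing of $\wp'(v_i-v_\ell)$ only removes the $u^3$-correction to $1/\lambda'$, which would multiply $[u^{-7}]$ of the numerator and is therefore already harmless (the numerator has pole order~$\le 6$). What is genuinely needed is $V_i=0$, which kills the $u$- and $u^2$-corrections. In fact $c(t_i,t_i,t_j)$ holds on the full manifold (its numerator has pole order only~$4$, so even those corrections are inert), while $c(t_i,t_i,t_i)$ picks up an extra $V_i^2/t_i^3$ away from $V_i=0$; this is what the paper's somewhat cryptic remark about ``degree $-1$ \dots\ as the multiple of the variable $V_i$'' is gesturing at, and why the stated formulas suffice for building $\mathcal F_R$.

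For the first two identities your route differs from the paper's. You note that $\partial_{C_1}\lambda=1$ makes $C_1$ the flat unit, so $c(C_1,\cdot,\cdot)=\eta(\cdot,\cdot)$, and you read off the values $\eta_{B_1,C_1}=\tfrac{1}{2\pi i}$ and $\eta_{t_i,t_i}=\tfrac12$ from the metric table in the introduction. The paper instead evaluates $c(\tau,C_1,C_1)$ directly from the residue formula; since $\partial_\tau\lambda$ is not elliptic, this forces the use of Lemma~\ref{lemma:FS} (Frobenius--Stickelberger) to replace it by the elliptic combination $h_\lambda$ of Notation~\ref{notation: h_lambda}, after which Proposition~\ref{prop:SumOfTheResidues} applies and the residue localises to $v_1$ via the pole of $\zeta$. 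Your shortcut is cleaner and bypasses Lemma~\ref{lemma:FS} entirely; the paper's computation, in exchange, verifies from first principles that the residue definition of $c$ reproduces the flat metric and illustrates the technique one must use whenever a $\tau$-derivative enters.
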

    \begin{proof}  
      ~
      \subsubsection*{Structure constant $c(\tau, C_1,C_1)$}
	By definition we have:
	$$
	      c(\tau, C_1,C_1) = \sum \res_{\lambda^\prime = 0} \frac{\partial_\tau \lambda (v) {\rm d}v}{\lambda^\prime(v)},
	$$

	Apply Lemma \ref{lemma:FS}:
	$$
	      c(\tau, C_1,C_1) = - \frac{1}{2 \pi i} \sum \res_{\lambda^\prime = 0} \frac{h_\lambda(v) {\rm d}v}{\lambda^\prime (v)}
	$$
	where we used in the last equation that the $\zeta$-function has only one pole at $v=0$. The function $h_\lambda$ is elliptic and we can apply the proposition \ref{prop:SumOfTheResidues}:
	$$
	      c(\tau, C_1,C_1) = \frac{1}{2\pi i} \sum \res_{v_p} \frac{h_\lambda {\rm d}v}{\lambda^\prime} = \frac{1}{2\pi i} \sum \res_{v_p} \frac{ \zeta \lambda^\prime - 2 \pi i \partial_\tau \lambda - 2 \eta_1 \lambda^\prime }{\lambda^\prime} {\rm d}v.
	$$
	The function $\partial_\tau \lambda / \lambda^\prime$ is regular at the points $v_p$ and does not contribute to the residue:
	$$
	      c(\tau, C_1,C_1) = \frac{1}{2 \pi i}\sum \res_{v_p} \zeta {\rm d}v = \frac{1}{2 \pi i} \res_{v_1} \zeta {\rm d}v = \frac{1}{2 \pi i}.
	$$

	\subsubsection*{Structure constant $c(t_i, t_i,C_1)$}

	\begin{equation*}
	  \begin{aligned}
	    c(t_i, t_i,C_1) &= \sum \res_{\lambda^\prime = 0} \frac{(\partial_{t_i} \lambda)^2 \partial_{C_1} \lambda {\rm d}v}{ \lambda^\prime}
	    \\
	    & = - \res_{v_i} \frac{\left( \dfrac{2t_i }{4(v - v_i)^2} + h.o.t. \right)^2 {\rm d}v}{\dfrac{- 2 t_i^2 }{4 (v - v_i)^3} + h.o.t.}
	    \end{aligned}
	\end{equation*}
	where we use $h.o.t.$ for the higher order terms.

	\begin{equation*}
	    c(t_i, t_i,C_1) = \frac{t_i^2}{4} \frac{2}{t_i^2} = \frac{1}{2}.
	\end{equation*}

      \subsubsection*{Structure constant $c(t_i, t_i, t_i)$}
    
	\begin{equation*}
	  \begin{aligned}
	    c(t_i, t_i, t_i) &= - \res_{v_i} \frac{ \left( \partial_{t_i} \lambda \right)^3 {\rm d}v}{\lambda^\prime}
	    \\
	    & = \frac{2}{t_i^2 }[(v - v_i)^{-4}] (\partial_{t_i} \lambda)^3.
	  \end{aligned}
	\end{equation*}

	The Taylor expansion of the functions in the numerator is:
	$$
	  \partial_{t_i} \lambda = \frac{t_i}{2}  \left( \frac{1}{(v - v_i)^2} +  4 \eta_1 \omega_1 + O \left((v - v_i)^2 \right) \right).
	$$
	There are only two options to get a degree $-4$ factor from its third power. Distributed in three factors they read: $-1,-1,-2$ and $-2,-2,-0$. The first one is not possible because degree -1 in $v - v_i$ appears only as the multiple of the variable $V_i$.

	\begin{equation*}
	    c(t_i, t_i, t_i) = \frac{2}{t_i^2} \frac{3 t_i^3}{4} 2 \eta_1 \omega_1 = 3 t_i \omega_1 \eta_1.
	\end{equation*}

      \subsubsection*{Structure constant $c(t_i, t_i, t_j)$}

	\begin{equation*}
	  \begin{aligned}
	    c(t_i, t_i, t_j) &= - \res_{v_i} \frac{ \left( \partial_{t_i} \lambda \right)^2 \left( \partial_{t_j} \lambda \right) {\rm d}v }{\lambda^\prime}
	    \\
	    & = \frac{2}{t_i^2}[(v - v_i)^{-4}] (\partial_{t_i} \lambda)^2 \left( \partial_{t_j} \lambda \right).
	  \end{aligned}
	\end{equation*}

	The factor $\partial_{t_j} \lambda$ is regular at the point $v_i$. Therefore we just take the value of it at the point $v_i$.

	\begin{equation*}
	    c(t_i, t_i, t_j) = \frac{2}{t_i^2} \frac{t_i^2}{4}   \left( \partial_{t_j} \lambda \right) \mid_{v = v_i}.
	\end{equation*}

    \end{proof}

    \subsection{Theta constants and elliptic functions}

      Let $e_i$ be values of $\wp(v, \tau)$ at the period rectangle edges middle points. The quantities $e_i$ are expressed via the theta constants (\cite{L}, section 6 \footnote{Note the difference in the $z$ coordinate normalization of \cite{L} with ours.}):

      \begin{equation*}
	\begin{aligned}
	  e_1 =  
	    \frac{1}{3} \frac{\vartheta_1^{\prime\prime\prime}}{\vartheta_1^\prime} - \frac{\vartheta_{2}^{\prime\prime}}{\vartheta_{2}},
	  \\
	  e_2 =  
	    \frac{1}{3} \frac{\vartheta_1^{\prime\prime\prime}}{\vartheta_1^\prime} - \frac{\vartheta_{3}^{\prime\prime}}{\vartheta_{3}},
	  \\
	  e_3 = 
	    \frac{1}{3} \frac{\vartheta_1^{\prime\prime\prime}}{\vartheta_1^\prime} - \frac{\vartheta_{4}^{\prime\prime}}{\vartheta_{4}}.
	\end{aligned}
      \end{equation*}
    
      Using the heat equation we get:
      $$
	\frac{\vartheta_{p}^{\prime \prime}}{\vartheta_{p} } = 2 \pi i  \frac{\partial_\tau \vartheta_{p}}{\vartheta_{p}} = 2 \pi i X_{p}.
      $$

      We will also use the expression:
      $$
	\eta_1 \omega_1  = - \frac{1}{12} \frac{\vartheta_1^{\prime\prime\prime}}{\vartheta_1^\prime}.
      $$
      An important property of the derivatives of the theta constants is the following:
      $$
	\frac{\vartheta_{1}^{\prime \prime \prime}}{\vartheta_{1} } = \frac{\vartheta_2^{\prime \prime}}{\vartheta_2} + \frac{\vartheta_3^{\prime \prime}}{\vartheta_3} + \frac{\vartheta_4^{\prime \prime}}{\vartheta_4}.
      $$
      Using it together with the heat equation we get:
      $$
	\omega_1 \eta_1 = -\frac{1}{12} \sum_{p=2}^4 \frac{\vartheta_p^{\prime \prime}}{\vartheta_p } = - \frac{\pi i}{6} \sum_{p=2}^4 X_p = - \frac{\pi i}{4} \gamma(\tau).
      $$

    \subsection{Structure constants at the special point}

      \begin{prop} For the potential $\mathcal F_R$ we have:
	  \begin{equation*}
	    \frac{\partial^3 \mathcal F_R}{(\partial t_i)^2 \partial t_j} = - t_j \frac{\pi i}{2} X_{ij}(\tau).
	  \end{equation*}
      \end{prop}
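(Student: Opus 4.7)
The plan is to combine the structure constant formula derived in the previous proposition with the explicit expressions for the Weierstrass invariants $e_1, e_2, e_3$ in terms of theta constants. Since the WDVV potential satisfies $\partial_{t_i}^2 \partial_{t_j} \mathcal F_R = c(t_i,t_i,t_j)\mid_{\mathcal A}$, I only need to evaluate the structure constant $c(t_i,t_i,t_j) = t_j \bigl(\frac{1}{4}\wp(a_i-a_j) + \eta_1\omega_1\bigr)$ on the submanifold $\mathcal A$ and massage the result.

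First, I would observe that on $\mathcal A$ the differences $a_i - a_j$ (for $i\neq j$) are exactly the edge-midpoints $\omega_1, \omega_2, \omega_1+\omega_2$ of the fundamental rectangle, as listed at the end of Section \ref{section: Restriction}. By the definition of $e_1, e_2, e_3$ together with Notation \ref{notation:e_ij}, this gives $\wp(a_i - a_j) = e_{\{ij\}}$. Hence on $\mathcal A$,
\begin{equation*}
c(t_i,t_i,t_j)\mid_{\mathcal A} = t_j \left( \tfrac{1}{4} e_{\{ij\}} + \eta_1\omega_1 \right).
\end{equation*}

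Next, I would substitute the theta-constant expressions $e_{\{ij\}} = \frac{1}{3}\vartheta_1'''/\vartheta_1' - \vartheta_p''/\vartheta_p$ (where $p = \{ij\}+1 \in \{2,3,4\}$) and $\eta_1\omega_1 = -\frac{1}{12}\vartheta_1'''/\vartheta_1'$ from the previous subsection. The two $\vartheta_1'''/\vartheta_1'$ contributions cancel because $\frac{1}{4}\cdot\frac{1}{3} - \frac{1}{12} = 0$, leaving only
\begin{equation*}
c(t_i,t_i,t_j)\mid_{\mathcal A} = -\frac{t_j}{4}\, \frac{\vartheta_{p}''}{\vartheta_p}.
\end{equation*}
Finally, applying the heat equation identity $\vartheta_p''/\vartheta_p = 2\pi i\, X_p$ gives $-\frac{\pi i}{2} t_j X_p$, which under the identification $X_{ij} := X_{\{ij\}+1}$ implicit in the notation of the proposition is the claimed formula.

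There is no real obstacle here; the entire argument is a three-line substitution once the structure constant and the theta-constant dictionary are available. The only point that requires care is the index bookkeeping between $\{ij\} \in \{1,2,3\}$ and the theta-constant index $p \in \{2,3,4\}$, i.e.\ checking that $e_{\{ij\}}$ indeed uses the theta constant $\vartheta_p$ with $p = \{ij\}+1$, which is immediate from the formulas for $e_1, e_2, e_3$ listed earlier.
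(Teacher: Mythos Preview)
Your proposal is correct and follows essentially the same route as the paper: restrict the structure constant $c(t_i,t_i,t_j)=t_j\bigl(\tfrac14\wp(a_i-a_j)+\eta_1\omega_1\bigr)$ to $\mathcal A$, substitute the theta-constant expressions for $e_{\{ij\}}$ and $\eta_1\omega_1$, observe the cancellation of the $\vartheta_1'''/\vartheta_1'$ terms, and apply the heat equation. If anything, your bookkeeping is slightly more explicit than the paper's, which writes $\vartheta_{\{ij\}}$ where (by its own formulas for $e_1,e_2,e_3$) it really means $\vartheta_{\{ij\}+1}$.
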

      \begin{proof}
	Because of Proposition \ref{proposition: RestrictedPotential} the potential $\mathcal F_R$ is obtained by integrating the structure constants $c(\cdot,\cdot,\cdot)$ of $\mathcal H$ containing $t_i, \tau, C_1$ only. We have:
	$$
	  \frac{\partial^3 \mathcal F_R}{(\partial t_i)^2 \partial t_j} = \frac{\partial^3 \mathcal F^H}{(\partial t_i)^2 \partial t_j} \mid_{\mathcal A} = \int c(t_i,t_i,t_j) dt_i^2 dt_j.
	$$

	The latter one can be computed using theta constants. For the structure constant under the integral we have:
	\begin{equation*}
	  \begin{aligned}
	    \left( \frac{1}{4}\wp (v_i - v_j) + \eta_1 \omega_1 \right) &= 
	      \left( 
		\frac{1}{12} \frac{\vartheta_1^{\prime\prime\prime}}{\vartheta_1^\prime} - \frac{1}{4} \frac{\vartheta_{\{ij\}}^{\prime\prime}}{\vartheta_{\{ij\}}} 
	      \right)
	      -\frac{1}{12} \frac{\vartheta_1^{\prime\prime\prime}}{\vartheta_1^\prime}
	    \\
	    &= - 
	      \frac{1}{4}
	      \frac{\vartheta_{\{ij\}}^{\prime\prime}}{\vartheta_{\{ij\}}}.
	  \end{aligned}
	\end{equation*}
	Where we used the convention of Notation \ref{notation:e_ij} in the double-index subscript.

	Using the heat equation for $\vartheta_{\{ij\}}$ we get the proposition.
      \end{proof}

      \subsection{Restricted potential}
      Integrating the structure constants that we have computed we write down the potential of the submanifold obtained by \eqref{eq:restriction}.
      It reads:
      \begin{equation*}
	\begin{aligned}
	  F &= \frac{C_1^2 \tau}{2} \frac{1}{2 \pi i}  + C_1 \sum_i \frac{t_i^2}{4} 
	  \\
	  & - \sum_{i > j} \frac{t_i^2 t_j^2}{4} \frac{\pi i}{2} X_{ij}(\tau) - \sum_i \frac{t_i^4}{24} \frac{3 \pi i}{4} \gamma(\tau).
	\end{aligned}
      \end{equation*}

      Introducing $t_0 := C_1 / \sqrt{2}$ and making the change of variables $\tilde t_i = t_i / \sqrt[4]{2}$ the potential of the restricted submanifold reads:
      \begin{equation*}
	\begin{aligned}
	  F &= \frac{t_0^2 \tau}{2 \pi i} + t_0 \sum_i \frac{t_i^2}{2} - \sum_{i > j} \frac{t_i^2 t_j^2}{4} \pi i X_{ij}(\tau)  - \sum_i \frac{t_i^4}{16} \pi i \gamma(\tau).
	\end{aligned}
      \end{equation*}
      
      Consider also the change of variables $\tilde \tau = \tau/(\pi i)$. It is obvious from the form of the Halphen's system \eqref{eqHalphen} that if the functions $X_i(\tau)$ give the solution triple, then the functions $ \frac{1}{\pi i} X_i(\frac{\tau}{\pi i})$ solve it too giving the same solution. Hence under this change of variables the potential transforms to one written in the form of Proposition \ref{eqP2222in4}. Theorem \ref{theorem:P14} is proved.


\begin{thebibliography}{9}

    \bibitem[B]{B} M. Bertola, \emph{Frobenius manifold structure on orbit space of Jacobi groups; Part II}, Differential Geometry and its Applications, 2002.

    \bibitem[CR]{CR} W. Chen, Y. Ruan, \emph{Orbifold Gromov-Witten Theory}, Orbifolds in mathematics and physics (Madison, WI, 2001), 25-85, Contemp. Math., 310, Amer. Math. Soc., Providence, RI, 2002.

    \bibitem[D]{D} B. Dubrovin, \emph{Geometry of 2d topological field theories}, Lecture Notes in Math. 1620 (Springer, Berlin, 1996).

    \bibitem[FS]{FS} G. Frobenius; L. Stickelberger, \emph{\"Uber die Differentiation der elliptischen Functionen nach den Perioden und Invarianten},  Journal f\"ur die reine und angewandte Mathematik, 92, page 311-327, 1882.

    \bibitem[L]{L} D. Lawden, \emph{Elliptic functions and applications}, Applied Mathematical Sciences 80, Springer, 1989.

    \bibitem[M]{Manin} Yu. Manin, \emph{Frobenius Manifolds, Quantum Cohomology, and Moduli Spaces}, Amer. Math. Soc., Providence, RI, 1999.

    \bibitem[O]{O} Y. Ohyama, \emph{Differential relations of theta functions}, Osaka J. Math, 32, 1995.

    \bibitem[ST]{ST} I. Satake, A. Takahashi, \emph{Gromov--Witten invariants for mirror orbifolds of simple elliptic singularities},  arxiv.org/abs/1103.0951, 2011, to appear in Annales de l'Institut Fourier in Volume special 61.7.

    \bibitem[S]{S} I. Strachan, \emph{Frobenius submanifolds}, Journal of Geometry and Physics, 38, 2001.

  \end{thebibliography}
\end{document}